\documentclass{amsart}
\newcommand{\mt}[1]{\mathtt{#1}}

\newcommand{\mF}{\mathcal{F}}

\newcommand{\bfx}{\mathbf{x}}
\newcommand{\spn}{\mbox{span}}
\newcommand{\bn}{\bar{n}}

\usepackage{amsmath}
\usepackage{amsthm}
\usepackage{graphicx}
\usepackage{color}
\usepackage{amsfonts}
\usepackage{amssymb}
\usepackage{mathrsfs}
\usepackage{amscd}
\usepackage{url}

\newcommand{\on}{\bar{n}}
\newcommand{\re}{\mathbb{R}}
\newcommand{\cpx}{\mathbb{C}}

\newcommand{\N}{\mathbb{N}}

\renewcommand{\P}{\mathbb{P}}

\newcommand{\lmd}{\lambda}

\newcommand{\eps}{\epsilon}

\def\af{\alpha}
\def\bt{\beta}
\def\gm{\gamma}
\def\rank{\mbox{rank}}

\newcommand{\sig}{\sigma}

\newcommand{\reff}[1]{(\ref{#1})}

\newcommand{\mc}[1]{\mathcal{#1}}



\newcommand{\bdes}{\begin{description}}
\newcommand{\edes}{\end{description}}

\newcommand{\bal}{\begin{align}}
\newcommand{\eal}{\end{align}}

\newcommand{\bnum}{\begin{enumerate}}
\newcommand{\enum}{\end{enumerate}}

\newcommand{\bit}{\begin{itemize}}
\newcommand{\eit}{\end{itemize}}

\newcommand{\bea}{\begin{eqnarray}}
\newcommand{\eea}{\end{eqnarray}}
\newcommand{\be}{\begin{equation}}
\newcommand{\ee}{\end{equation}}

\newcommand{\baray}{\begin{array}}
\newcommand{\earay}{\end{array}}

\newcommand{\bsry}{\begin{subarray}}
\newcommand{\esry}{\end{subarray}}

\newcommand{\bca}{\begin{cases}}
\newcommand{\eca}{\end{cases}}

\newcommand{\bcen}{\begin{center}}
\newcommand{\ecen}{\end{center}}

\newcommand{\bbm}{\begin{bmatrix}}
\newcommand{\ebm}{\end{bmatrix}}

\newcommand{\bmx}{\begin{matrix}}
\newcommand{\emx}{\end{matrix}}

\newcommand{\bpm}{\begin{pmatrix}}
\newcommand{\epm}{\end{pmatrix}}

\newcommand{\btab}{\begin{tabular}}
\newcommand{\etab}{\end{tabular}}

\newtheorem{theorem}{Theorem}[section]

\newtheorem{prop}[theorem]{Proposition}
\newtheorem{lem}[theorem]{Lemma}

\newtheorem{cor}[theorem]{Corollary}

\newtheorem{defi}[theorem]{Definition}
\theoremstyle{definition}

\newtheorem{exm}[theorem]{Example}
\newtheorem{alg}[theorem]{Algorithm}

\newtheorem{remark}[theorem]{Remark}


\setcounter{equation}{0}
\setcounter{subsection}{0}

\begin{document}

\title[Low Rank Tensor Approximations]
{Nearly Low Rank Tensors and Their Approximations}

\author{Jiawang Nie}
\address{
Department of Mathematics,  University of California San Diego,  9500
Gilman Drive,  La Jolla,  California 92093,  USA.
} \email{njw@math.ucsd.edu}

\begin{abstract}
A tensor is a multi-indexed array. The low rank tensor approximation problem (LRTAP) is
to find a tensor whose rank is small and that is close to a given one.
This paper studies the LRTAP when the tensor to be approximated
is close to a low rank one. Both symmetric and nonsymmetric tensors are discussed.
We propose a new approach for solving the LRTAP.
It consists of three major stages:
i) Find a set of linear relations that are approximately satisfied by the tensor;
such linear relations can be expressed by polynomials and
can be found by solving linear least squares.
ii) Compute a set of points that are approximately common zeros
of the obtained polynomials;
they can be found by computing Schur decompositions.
iii) Construct a low rank approximating tensor from the obtained points;
this can be done by solving linear least squares.
Our main conclusion is that if the given tensor is sufficiently
close to a low rank one, then the computed tensor
is a good enough low rank approximation. This approach can also be applied to
efficiently compute low rank tensor decompositions,
especially for large scale tensors.
\end{abstract}

\keywords{tensor, tensor rank, low rank approximation,
tensor decomposition, generating polynomial, least squares}

\subjclass[2010]{65F99, 15A69, 65K10}

\maketitle

\section{Introduction}

Let $m$ and $n_1,\ldots,n_m$ be positive integers. Denote by
$\cpx^{n_1,\ldots,n_m}$ the space of all complex tensors of order $m$
and dimension $(n_1,\ldots,n_m)$.
%
%
Each $\mc{F} \in \cpx^{n_1,\ldots,n_m}$ is
indexed by an integer tuple $(i_1, \ldots, i_m)$ with
$1 \leq i_j \leq n_j \, (j=1,\ldots,m)$, i.e.,
\[
\mc{F} = (\mc{F}_{i_1 \ldots i_m})_{1 \leq i_1 \leq n_1, \ldots,  1 \leq i_m \leq n_m }.
\]
Tensors of order $m$ are called $m$-tensors.
When $m=1$ (resp., $2$), they are vectors (resp., matrices).
When $m=3$ (resp., $4$), they are called cubic (resp., quartic) tensors.
The standard norm of $\mF$ is defined and denoted as:
\be \label{tensor:norm}
\|\mF\|  :=
\Big( \sum_{ 1\leq i_1 \leq n_1, \ldots,  1 \leq i_m \leq  n_m }
|\mc{F}_{i_1 \ldots i_m} |^2 \Big)^{1/2}.
\ee
For vectors $u^1 \in \cpx^{n_1}, \ldots, u^m
\in \cpx^{n_m}$, their outer product $u^1 \otimes \cdots \otimes u^m$
is the tensor in $\cpx^{n_1,\ldots,n_m}$ such that
for all $i_1, \ldots, i_m$ in the range,
\[
(u^1 \otimes \cdots \otimes u^m)_{i_1,\ldots,i_m} = (u^1)_{i_1}
\cdots (u^m)_{i_m}.
\]
An outer product like $u^1 \otimes \cdots \otimes u^m$
is called a rank-1 tensor.
For every $\mc{F}$, there exist rank-1 tensors
$\mc{F}_1,\ldots, \mc{F}_r $ such that
\be \label{dcmp:rank-r}
\mc{F} = \mc{F}_1 + \cdots + \mc{F}_r.
\ee
The smallest such $r$ is called the rank of $\mF$,
denoted as $\rank(\mF)$. If $\rank(\mF)=r$,
$\mF$ is called a rank-$r$ tensor and
\reff{dcmp:rank-r} is called a rank decomposition.
We refer to \cite{KolBad09,Lim13}
for the theory and applications of tensors.

\subsection{Low rank approximations}

Typically, it is hard to compute rank decompositions for tensors (cf.~\cite{HiLi13}).
In applications, we often need to approximate a tensor
by a low rank one. The low rank tensor approximation problem (LRTAP)
is that, for given $\mF \in \cpx^{n_1,\ldots,n_m}$
and given $r$ (typically small), find
$r$ tuples $u^{(\ell)}:=(u^{\ell,1}, \ldots, u^{\ell,m}) \in
\cpx^{n_1} \times \cdots \times \cpx^{n_m}$ ($\ell = 1,\ldots,r$)
that give a minimizer to the nonlinear least squares problem
\be \label{nLS:lra:F}
\min_{ u^{(1)}, \ldots,   u^{(m)} } \quad
\Big\| \sum_{\ell=1}^r  u^{\ell,1} \otimes \cdots \otimes u^{\ell,m}
- \mc{F} \Big\|^2.
\ee
In contrast to the matrix case, the best rank-$r$
tensor approximation may not exist when the order $m>2$ (cf.~\cite{DeSLim08}).
This is because the set of tensors,
whose ranks are less than or equal to $r$,
may not be closed. For such case, an almost best rank-$r$ approximation
is then often wanted.
The LRTAP has important applications in computer vision \cite{VasTer02,WangAhu03},
linear algebra \cite{HOSVD2000,Kol01},
multiway data analysis \cite{Kroon08}, numerical analysis \cite{HacKho07,KhoKho07},
and signal processing \cite{CPdL02,dLaMo98,SBG00}.
A frequently used method for solving LRTAPs is the
alternating least squares (ALS) (cf.~\cite{CLdA09,KolBad09}).
ALS can be easily implemented in computations,
while its convergence property is generally not very satisfying.
When $r=1$, the case of best rank-$1$ approximations,
there exist other methods, e.g., higher order power iterations
(cf.~\cite{LMV00b,SHOPM2002,KolMayo11,ZLQ12}),
semidefinite relaxations (cf.~\cite{NieWan13}).
We refer to \cite{ComLim11,GKT13} for recent work
on low rank tensor approximations. In applications,
a related problem is the multilinear low rank approximation
(cf.~\cite{LMV00b,FriTam14,IAvD13,SavLim10}).

In applications, people often need to approximate symmetric tensors
with low rank symmetric ones.
A tensor $ \mF \in \cpx^{n_1 \times \cdots \times n_m}$ is symmetric
if $n_1=\cdots = n_m=n$ and each $\mF_{i_1 \ldots i_m}$ is invariant
under permutations of $(i_1,\ldots,i_m)$.
Let $\mt{S}^m(\cpx^{n})$ be the space of all symmetric tensors
of order $m$ and dimension $(n,\ldots,n)$.
For $u \in \cpx^n$, its $m$-th tensor power is defined as
\[
u^{\otimes m} := u \otimes \cdots \otimes u \quad
(u \mbox{ is repeated $m$ times}).
\]
Clearly, each $u^{\otimes m}$ is a rank-1 symmetric tensor.
The low rank symmetric tensor approximation problem (LRSTAP) is that,
for given $\mF \in \mt{S}^m(\cpx^{n})$ and given $r$ (typically small), find
vectors $u_1,\ldots, u_r \in \cpx^n$
that give a minimizer to the nonlinear least squares problem
\be \label{nLS:lra:symF}
\min_{ u_1,\ldots, u_r \in \cpx^n } \quad
\Big\| (u_1)^{\otimes m} + \cdots +(u_r)^{\otimes m}
- \mc{F} \Big\|^2.
\ee
When $r=1$, there exists much work on the approximations.
We refer to \cite{LMV00b,SHOPM2002,NieWan13,ZhaGol01,ZLQ12}.
%
%
When $r>1$, there exists relatively few work.
The ALS method for nonsymmetric tensors
would also be used for approximating symmetric tensors,
by forcing the symmetry in the computation (cf.~\cite{LMV00b,SHOPM2002}).
But its performance is generally not very satisfying.
%
%

\subsection{Contributions}

In this paper, we propose a new approach for computing
low rank approximations, for both symmetric and nonsymmetric tensors.
It is based on finding linear relations satisfied by low rank tensors,
which can be expressed by the so-called generating polynomials (cf.~\cite{GPSTD}).
In applications of low rank approximations,
the tensor to be approximated is often close to a low rank one.
This is typically the case in applications, because the occurring errors
(usually caused by measuring inaccuracies or noises) are often small.
This fact motivates us to compute approximations
by finding the hidden linear relations that are
satisfied by low rank tensors.

We propose to use generating polynomials for
computing low rank tensor approximations.
First, we estimate generating polynomials by solving linear least squares.
Second, we find their approximately common zeros;
this can be done by computing Schur decompositions.
Third, we construct a low rank approximation from those zeros,
by solving linear least squares.
Our main conclusion is that if the tensor to be approximated is close enough
to a low rank one, then the constructed tensor
is a good enough low rank approximation.
The proof is build on perturbation analysis of linear least squares
and Schur decompositions. The proposed methods
can also be applied to compute low rank decompositions
efficiently, especially for large scale tensors.

The paper is organized as follows.
In \S \ref{sc:prlm}, we present some basics that need to be used.
In \S \ref{sc:grdef}, we introduce generating polynomials
for both symmetric and nonsymmetric tensors.
In \S \ref{sc:lra:sym}, we give an algorithm for computing
low rank symmetric approximations for symmetric tensors, and then analyze its performance.
In \S \ref{sc:lra:nonsym}, we give an algorithm for computing
low rank approximations for nonsymmetric tensors, and analyze its performance.
In \S \ref{sc:comp}, we report numerical experiments for the proposed algorithms.

\section{Preliminaries}
\label{sc:prlm}
\setcounter{equation}{0}

\noindent
{\bf Notation} \,
The symbol $\N$ (resp., $\re$, $\cpx$) denotes the set of
nonnegative integers (resp., real, complex numbers).
For any $t\in \re$, $\lceil t\rceil$ (resp., $\lfloor t\rfloor$)
denotes the smallest integer not smaller
(resp., the largest integer not bigger) than $t$.
%
%
The cardinality of a finite set $S$ is denoted as $|S|$.
%
%
For a complex matrix $A$, $A^T$ denotes its transpose and $A^*$
denotes its conjugate transpose.
%
%
%
For a complex vector $u$, $\| u \|_2 = \sqrt{u^*u}$ denotes the standard Euclidean norm.
For a matrix $A$, $\| A \|_2$ denotes its standard operator $2$-norm,
and $\|A\|_F$ denotes its standard Frobenius norm.

\subsection{Catalecticant matrices and border ranks}
\label{ssc:Cat:bdrk}

We partition $n_1,\ldots,n_m$
into two disjoint sets $S_1$ and $S_2$ such that
$
\big| \Pi_{i \in S_1} n_i -  \Pi_{i\in S_2} n_i  \big|
$
is minimum. Up to a permutation of indices, we assume that
\[
S_1=\{n_1, \ldots, n_{\hat{m}} \},  \quad
S_2= \{ n_{\hat{m}+1}, \ldots,  n_m \}.
\]
For convenience, let
\[
I_1 = \{
(\imath_1,\ldots, \imath_{m_1}):  \, 1 \leq \imath_j \leq n_j
\,\mbox{ for }\, j=1, \ldots,  \hat{m} \},
\]
\[
I_2 = \{
(\jmath_{\hat{m}+1},\ldots, \jmath_{m}):  \, 1 \leq \imath_j \leq n_j
\,\mbox{ for }\, j= \hat{m}+1, \ldots, m  \}.
\]
For $ \mF \in \cpx^{n_1,\ldots,n_m}$, its Catalecticant matrix
(here we consider the most square one, cf.~\cite{IaKa99}) is defined as
\be \label{Cat:Mat:nsym}
\mbox{Cat}(\mF) := \Big(\mF_{\imath, \jmath}\Big)_{ \imath \in I_1, \jmath \in I_2}.
\ee
Let $\sig_r(n_1,\ldots,n_m)$ be the closure of the set of all possible sums
$\mF_1+\cdots+\mF_r$, where each $\mF_i$ is a rank-$1$ tensor,
in the Zariski topology (cf.~\cite{CLO07}).
It is an irreducible variety of $\cpx^{n_1,\ldots,n_m}$.
%
%
A property {\tt P} is said to be {\it generically} or {\it generally} true on
$\sig_r(n_1,\ldots,n_m)$ if {\tt P} is true in a nonempty Zariski open subset $T$ of it.
For such a property {\tt P}, we say that $u$
is a generic or general point for {\tt P} if $u \in T$.
For $\mF \in \sig_r(n_1,\ldots,n_m)$, it is possible that $\rank(\mF)>r$.
The {\it border rank} of $\mF$ is defined and denoted as
\be \label{bord:rk:nsym}
\rank_B(\mc{F}) = \min \left\{r: \, \mF \in \sig_r(n_1,\ldots,n_m)  \right\}.
\ee
For all $\mF \in \cpx^{n_1,\ldots, n_m}$, it holds that
\be  \label{rk:Cat<=B:nsym}
\rank\,\mbox{Cat}(\mF) \leq \rank_B(\mF) \leq  \rank(\mF).
\ee
Interestingly, the above three ranks are equal if
$\mF$ is a general point of $\sig_r(n_1,\ldots,n_m)$ and $r$ is small.

\begin{lem}  \label{rank=Cat:nsym}
Let $s$ be the smaller size of $\mbox{Cat}(\mF)$.
For all $r \leq s$, if $\mF$ is a generic point of
$\sig_r(n_1,\ldots,n_m)$, then
\be \label{rkS=Cat:nsym}
 \rank\,(\mF)= \rank\,\mbox{Cat}(\mF) = r.
\ee

\end{lem}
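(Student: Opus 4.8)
The plan is to establish the two equalities in \reff{rkS=Cat:nsym} by showing that all three ranks in the chain \reff{rk:Cat<=B:nsym} coincide with $r$ on a nonempty Zariski open subset of $\sig_r(n_1,\ldots,n_m)$. By \reff{rk:Cat<=B:nsym} it suffices to show two things generically: that $\rank\,\mbox{Cat}(\mF) \geq r$, and that $\rank(\mF) \leq r$. The second is essentially automatic: a generic point of $\sig_r(n_1,\ldots,n_m)$ lies in the image of the Segre parametrization $(\lambda, u^{(1)},\ldots,u^{(r)}) \mapsto \sum_{\ell=1}^r \lambda_\ell\, u^{\ell,1}\otimes\cdots\otimes u^{\ell,m}$, which is a dominant map onto $\sig_r$, so a generic fibre is nonempty and hence $\rank(\mF)\leq r$ there; this already gives $\rank_B(\mF)=\rank(\mF)=r$ away from $\sig_{r-1}$. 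So the crux is the inequality $\rank\,\mbox{Cat}(\mF)\geq r$, equivalently the existence of a nonvanishing $r\times r$ minor of $\mbox{Cat}(\mF)$ somewhere on $\sig_r$.

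First I would observe that $\mF\mapsto\mbox{Cat}(\mF)$ is linear, so each $r\times r$ minor of $\mbox{Cat}(\mF)$ is a polynomial in the entries of $\mF$; the locus where \emph{all} $r\times r$ minors vanish is a Zariski closed subset $Z$ of $\cpx^{n_1,\ldots,n_m}$. Since $\sig_r(n_1,\ldots,n_m)$ is irreducible, either $\sig_r\subseteq Z$ or $\sig_r\cap Z$ is a proper closed subset, in which case $\rank\,\mbox{Cat}(\mF)\geq r$ holds generically on $\sig_r$ and we are done. So the entire argument reduces to exhibiting \emph{one} tensor $\mF\in\sig_r$ with $\rank\,\mbox{Cat}(\mF)\geq r$. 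Here I would use a rank-$r$ tensor built from generic vectors: take $\mF=\sum_{\ell=1}^r u^{\ell,1}\otimes\cdots\otimes u^{\ell,m}$ with each $u^{\ell,j}$ chosen generically in $\cpx^{n_j}$. Then $\mbox{Cat}(\mF)=\sum_{\ell=1}^r a_\ell b_\ell^T$, where $a_\ell\in\cpx^{|I_1|}$ is the vectorization of $u^{\ell,1}\otimes\cdots\otimes u^{\ell,\hat m}$ and $b_\ell\in\cpx^{|I_2|}$ that of $u^{\ell,\hat m+1}\otimes\cdots\otimes u^{\ell,m}$. Since $r\leq s=\min(|I_1|,|I_2|)$, it suffices to check that $\{a_1,\ldots,a_r\}$ and $\{b_1,\ldots,b_r\}$ are each linearly independent for generic choices, which would force $\rank\,\mbox{Cat}(\mF)=r$.

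The linear independence of the $a_\ell$ (and symmetrically the $b_\ell$) is the main technical point, and I expect it to be the only place requiring real work. The $a_\ell$ are points on the Segre--Veronese-type variety parametrizing decomposable tensors in $\cpx^{n_1}\otimes\cdots\otimes\cpx^{n_{\hat m}}$, which spans the whole space of dimension $n_1\cdots n_{\hat m}=|I_1|\geq s\geq r$; since a spanning family of an irreducible variety contains $r$ linearly independent points and the condition ``$a_1,\ldots,a_r$ linearly dependent'' is Zariski closed in the parameters $(u^{1,1},\ldots,u^{r,\hat m})$, genericity of the $u^{\ell,j}$ yields independence. One clean way to make this fully explicit is an induction on $\ell$: having chosen $u^{1,j},\ldots,u^{\ell-1,j}$ so that $a_1,\ldots,a_{\ell-1}$ are independent (possible as $\ell-1<r\leq|I_1|$), the span of $a_1,\ldots,a_{\ell-1}$ is a proper subspace, and since the decomposable tensors span $\cpx^{|I_1|}$ there is a choice of $u^{\ell,1},\ldots,u^{\ell,\hat m}$ with $a_\ell$ outside that subspace; this is an open nonempty condition. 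Combining the two independences gives $\rank\,\mbox{Cat}(\mF)=r$ for this $\mF\in\sig_r$, hence $\sig_r\not\subseteq Z$, hence $\rank\,\mbox{Cat}(\mF)=r$ generically on $\sig_r$; together with $\rank\,\mbox{Cat}(\mF)\leq\rank(\mF)\leq r$ from the first paragraph, both equalities in \reff{rkS=Cat:nsym} follow on a nonempty Zariski open subset. \eproof
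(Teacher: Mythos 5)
Your proof is correct, and it follows the same overall strategy as the paper: view the $r\times r$ minors of the Catalecticant as polynomials, identify the vanishing locus as Zariski closed, and conclude by irreducibility. The differences are worth noting. The paper pulls the minors back to the parameter space $\P^{r(n_1+\cdots+n_m)-1}$ via the Segre-type parametrization $\pi$, defines the bad locus $Z$ there, asserts that $Y=\P^{r(\sum n_j)-1}\setminus Z$ is a nonempty open set of full dimension (citing a textbook), and then uses the fact that $\pi(Y)$ contains a Zariski open subset of $\sig_r(n_1,\ldots,n_m)$. You instead argue directly in the tensor space: since $\mF\mapsto\mbox{Cat}(\mF)$ is linear, the locus where all $r\times r$ minors vanish is closed, and you reduce the whole matter to producing one explicit $\mF\in\sig_r$ with $\rank\,\mbox{Cat}(\mF)\geq r$. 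This buys you something the paper leaves implicit: the paper's argument tacitly requires $Z$ to be a \emph{proper} subvariety (otherwise $Y$ is empty), i.e., that at least one of the minors $\phi_i$ is not the zero polynomial — which is exactly the existence of a witness — and the paper does not verify this. Your concrete construction, writing $\mbox{Cat}\bigl(\sum_\ell u^{\ell,1}\otimes\cdots\otimes u^{\ell,m}\bigr)=\sum_\ell a_\ell b_\ell^T$ and checking that the flattened factors $\{a_\ell\}$ and $\{b_\ell\}$ can be made linearly independent (using $r\leq s=\min(|I_1|,|I_2|)$ and that the decomposable tensors span the ambient space), supplies exactly the missing detail. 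It also sidesteps the appeal to Chevalley/Shafarevich about images of morphisms containing dense opens. The inductive step for linear independence is sound: at stage $\ell$ the span of $a_1,\ldots,a_{\ell-1}$ is a proper subspace (since $\ell-1<r\leq|I_1|$), and the Segre variety cannot lie in any proper subspace, so the open condition is nonempty.
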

\begin{proof}
Let $\phi_1,\ldots, \phi_k$ be the $r\times r$ minors of the matrix
\[
\mbox{Cat} \Big(  \sum_{i=1}^r
(x^{i,1}) \otimes \cdots  \otimes (x^{i,m}) \Big).
\]
They are homogeneous polynomials in
$x^{i,j}$ ($i=1,\ldots,r$, $j=1,\ldots,m$).
Let $x$ denote the tuple $\big( x^{1,1}, x^{1,2}, \ldots, x^{r,m} \big)$.
Define the projective variety in $\P^{r(n_1+\cdots+n_m)-1}$
\[
Z= \{x : \, \phi_1(x) =\cdots =\phi_k(x) = 0 \}.
\]
Then $Y:=\P^{r(n_1+\cdots+n_m)-1}\backslash Z$
is a Zariski open subset of full dimension (cf.~\cite{CLO07}.)
Consider the polynomial mapping
$\pi:\, Y \to \sig_r(n_1,\ldots,n_m),$
\[
\big( x^{1,1}, x^{1,2}, \ldots, x^{r,m} \big) \mapsto \sum_{i=1}^r
(x^{i,1}) \otimes \cdots  \otimes (x^{i,m}) .
\]
The image $\pi(Y)$ is dense in the irreducible variety $\sig_r(n_1,\ldots,n_m)$.
So, $\pi(Y)$ contains a Zariski open subset, say, $\mathscr{Y}$,
of $\sig_r(n_1,\ldots,n_m)$
(cf.~\cite[Theorem~6,\S5,Chap.I]{Sha:BAG1}).
For each $\mF \in \mathscr{Y}$, there exists
$u\in Y$ such that $\mF= \pi(u)$.
Because $u \not\in Z$, at least one of $\phi_1(u),\ldots,\phi_k(u)$
is nonzero, and hence $\rank\,\mbox{Cat}(\mF) \geq r$.
By \reff{rk:Cat<=B:nsym}, we know that \reff{rkS=Cat:nsym}
is true for all $\mF \in \mathscr{Y}$ because $\rank\,(\mF) \leq r$.
Since $\mathscr{Y}$ is a Zariski open subset of
$\sig_r(n_1,\ldots,n_m)$, the lemma is proved.
\end{proof}

By Lemma~\ref{rank=Cat:nsym},
if $r \leq s$, then for general $\mF \in \sig_r(n_1,\ldots,n_m)$
we have $\rank(\mF)= \rank \, \mbox{Cat}(\mF)=r$.
So, we can use $\rank \, \mbox{Cat}(\mF)$
to estimate $\rank(\mF)$ in practice when $\rank(\mF) \leq s$.
However, for general $\mF \in \cpx^{n_1 \times \cdots \times n_m}$
such that $\rank \, \mbox{Cat}(\mF) = r$, we cannot conclude that
$\mF \in \sig_r(n_1,\ldots,n_m)$.

\subsection{Symmetric tensors and symmetric ranks}
\label{sbsc:symtsr}

Recall that an $m$-tensor $\mF \in \cpx^{n \times \cdots \times n}$
is symmetric if and only if each $\mF_{i_1 \ldots i_m}$ is invariant
under permutations of $(i_1, \ldots, i_m)$.
The space of all such symmetric tensors is denoted as $\mt{S}^m(\cpx^n)$.
In applications, one would naturally write symmetric tensors as
sums of rank-$1$ symmetric tensors.
For $ \mF \in \mt{S}^m(\cpx^n)$, its {\it symmetric rank} is defined as
\be \label{df:rank:sym}
\rank_S\,(\mF) = \min\{ r: \,
(u_1)^{\otimes m}+\cdots+(u_r)^{\otimes m} = \mF \}.
\ee
Clearly, $\rank(\mF) \leq \rank_S(\mF)$.
Let $\sig_r^{m,n}$ be the closure of all possible sums
$(u_1)^{\otimes m}+\cdots+(u_r)^{\otimes m}$,
in the Zariski topology.
The set $\sig_r^{m,n}$ is an irreducible variety.
%
%
Like for $\sig_r(n_1,\ldots,n_m)$,
we say that a property {\tt P} is {\it generically} or {\it generally} true on
$\sig_r^{m,n}$ if it is true in a nonempty Zariski open subset $T$ of $\sig_r^{m,n}$.
For such a property {\tt P}, we say that $u$
is a generic or general point for {\tt P} if $u \in T$.
%
%
%

For $\mF \in \mt{S}^m(\cpx^n)$,
its {\it symmetric border rank} is defined as
\be \label{df:bordrank}
\rank_{SB}(\mc{F}) = \min \left\{r: \, \mF \in \sig_r^{m,n}  \right\}.
\ee
For symmetric tensors, the Catalecticant matrix
defined as in \reff{Cat:Mat:nsym} has repeating rows and columns.
To avoid repeating, for a symmetric tensor $\mF$, we define
its Catalecticant matrix as (cf.~\cite{IaKa99})
\be \label{df:CatMat:F}
\mbox{Cat}(\mF) := (\mF_{\af+\bt})_{|\af|\leq m_1, |\bt|\leq m_2},
\ee
where $m_1 = \lfloor \frac{m}{2} \rfloor$ and $m_2 = \lceil \frac{m}{2} \rceil$.
In \reff{df:CatMat:F}, $\mF$ is indexed by monomial powers
in $n-1$ variables and of degrees $\leq m$.
We refer to \S\ref{sbsc:gr:sym} (also see \cite{GPSTD}) for such indexing.
It always holds that (cf.~\cite{GPSTD})
\be  \label{rk:Cat<=B<=S}
\rank\,\mbox{Cat}(\mF) \leq  \rank_{SB}(\mF) \leq  \rank_S(\mF).
\ee
%
%
A similar version of Lemma~\ref{rank=Cat:nsym}
holds for symmetric tensors.

\begin{lem} \label{pro:rk:Cat=S}
Let $s = \min\{ \binom{n+m_1-1}{m_1}, \binom{n+m_2-1}{m_2} \}$.
For all $r \leq s$,
if $\mF$ is a generic point of $\sig_r^{m,n}$ then
\be \label{r<s:rkS=Cat}
 \rank_S(\mF) = \rank\,\mbox{Cat}(\mF) = r.
\ee
\end{lem}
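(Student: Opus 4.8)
The plan is to mirror the proof of Lemma~\ref{rank=Cat:nsym}, replacing the Segre parametrization by the symmetric one. First I would form the generic symmetric tensor $\mF(x) = \sum_{i=1}^r (x^i)^{\otimes m}$, where $x = (x^1,\ldots,x^r)$ ranges over $(\cpx^n)^r$, and take $\phi_1,\ldots,\phi_k$ to be the $r\times r$ minors of the Catalecticant matrix $\mbox{Cat}(\mF(x))$ defined as in \reff{df:CatMat:F}. These are homogeneous polynomials in the entries of $x$, so they cut out a projective variety $Z = \{x : \phi_1(x)=\cdots=\phi_k(x)=0\}$ in $\P^{rn-1}$, and its complement $Y := \P^{rn-1}\setminus Z$ is Zariski open. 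The point of choosing $r \le s$ is that some $r\times r$ minor is not identically zero on $(\cpx^n)^r$ (for instance, taking $x^1,\ldots,x^r$ generic makes $(x^1)^{\otimes m}+\cdots+(x^r)^{\otimes m}$ a generic point of $\sig_r^{m,n}$, whose Catalecticant has rank at least $r$ by the nonemptiness argument below, or more elementarily one can exhibit $r$ monomials whose images under the Veronese map are linearly independent), so $Y$ is nonempty and hence of full dimension $rn-1$.

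Next I would consider the polynomial map $\pi : Y \to \sig_r^{m,n}$ sending $x \mapsto \sum_{i=1}^r (x^i)^{\otimes m}$. Since $\sig_r^{m,n}$ is by definition the Zariski closure of the image of the full symmetric parametrization, and $Y$ is dense in $\P^{rn-1}$, the image $\pi(Y)$ is dense in the irreducible variety $\sig_r^{m,n}$; by \cite[Theorem~6,\S5,Chap.I]{Sha:BAG1}, $\pi(Y)$ therefore contains a nonempty Zariski open subset $\mathscr{Y}$ of $\sig_r^{m,n}$. For any $\mF \in \mathscr{Y}$ pick $u \in Y$ with $\pi(u) = \mF$; since $u \notin Z$, at least one $\phi_j(u) \ne 0$, so $\rank\,\mbox{Cat}(\mF) \ge r$. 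Combining with the chain \reff{rk:Cat<=B<=S} and the trivial bound $\rank_S(\mF) \le r$ coming from the explicit decomposition $\mF = \pi(u) = \sum_i (u^i)^{\otimes m}$, all three quantities in \reff{r<s:rkS=Cat} are sandwiched between $r$ and $r$, hence equal to $r$. Since $\mathscr{Y}$ is a nonempty Zariski open subset of $\sig_r^{m,n}$, the statement "generic" is established.

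I expect the only genuine subtlety to be confirming that $Y$ is nonempty, i.e.\ that the ideal of $r\times r$ minors of $\mbox{Cat}(\mF(x))$ is not the whole coordinate ring — equivalently, that the generic symmetric tensor of the form $\sum_{i=1}^r (x^i)^{\otimes m}$ has Catalecticant of rank exactly $r$ rather than dropping rank identically. For $r \le s$ this follows because one can choose $r$ vectors $x^1,\ldots,x^r$ of the special shape $(1,t_i,t_i^2,\ldots)$ (or any points in general position) so that the corresponding rows of $\mbox{Cat}$ indexed by a fixed set of $r$ monomials of degree $\le m_1$ form a submatrix that is (a submatrix of) a generalized Vandermonde, hence nonsingular; this needs only that there are at least $r$ available monomial indices of degree $\le m_1$ and at least $r$ of degree $\le m_2$, which is exactly the hypothesis $r \le s = \min\{\binom{n+m_1-1}{m_1}, \binom{n+m_2-1}{m_2}\}$. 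Everything else is a word-for-word transcription of the nonsymmetric argument, with $\P^{r(n_1+\cdots+n_m)-1}$ replaced by $\P^{rn-1}$ and $\sig_r(n_1,\ldots,n_m)$ replaced by $\sig_r^{m,n}$.
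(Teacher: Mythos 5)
Your argument is exactly the paper's: transcribe the Segre proof of Lemma~\ref{rank=Cat:nsym} to the Veronese setting, form the $r\times r$ minors $\phi_j$ of $\mbox{Cat}\bigl((x^1)^{\otimes m}+\cdots+(x^r)^{\otimes m}\bigr)$, pass to the dense constructible image, and sandwich all three ranks between $r$ and $r$ via \reff{rk:Cat<=B<=S}. The one place you go beyond the paper's terse ``the rest is the same'' is in verifying that $Y=\P^{rn-1}\setminus Z$ is actually nonempty --- i.e.\ that for $r\le s$ some $r\times r$ minor is not identically zero because $\mbox{Cat}\bigl(\sum_i(x^i)^{\otimes m}\bigr)=\sum_i v_{m_1}(x^i)\,v_{m_2}(x^i)^T$ attains rank $r$ at points in general position; this is a gap the paper tacitly leaves to the reader, and your Vandermonde/general-position reasoning is the right way to close it, so the proposal is correct.
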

\begin{proof}
It can be proved in the same way as for Lemma~\ref{rank=Cat:nsym}.
Let $\phi_1,\ldots, \phi_k$ be the $r\times r$ minors of the matrix
\[
\mbox{Cat} \big((x^1)^{\otimes m}+\cdots +(x^r)^{\otimes m} \big),
\]
which has dimension $s \times s$.
Denote $x := (x^1,\ldots,x^r)$, with each $x^i \in \cpx^n$, then
$\phi_1,\ldots, \phi_k$ are homogeneous polynomials in $x$.
%
%
Define the mapping:
\[
\pi:\, Y \to \sig_r^{m,n}, \quad (x^1,\ldots,x^r) \mapsto
(x^1)^{\otimes m}+\cdots +(x^r)^{\otimes m}.
\]
%
%
The rest of the proof is same as for Lemma~\ref{rank=Cat:nsym}.
\end{proof}

Lemma~\ref{pro:rk:Cat=S} implies that
if $r \leq s$, then for general $\mF \in \sig_r^{m,n}$
we have $\rank_S(\mF)= \rank \, \mbox{Cat}(\mF) = r$.
So, in practice, $\rank_S(\mF)$ can be estimated by
$\rank \, \mbox{Cat}(\mF)$ when $\rank_S(\mF) \leq s$.
However, for general $\mF \in \mt{S}^m( \cpx^n )$
such that $\rank \, \mbox{Cat}(\mF)=r$, we may not have
$\mF \in \sig_r^{m,n}$.

\section{Generating polynomials for tensors}
\label{sc:grdef}
\setcounter{equation}{0}

This section introduces generating polynomials
for symmetric and nonsymmetric tensors, and studies their properties.

\subsection{Symmetric tensors}
\label{sbsc:gr:sym}

We consider symmetric tensors in $\mt{S}^m(\cpx^n)$.
For convenience, denote
\[
\on \, := \, n-1, \quad
\N_m^{\on} := \{ \af  \in \N^{\on} : |\af|   \leq m \}.
\]
Let $\cpx[x]:=\cpx[x_1,\ldots,x_{\on}]$ be the ring of complex polynomials in
\[
x := (x_1,\ldots, x_{\on}).
\]
For $\af = (\af_1,\ldots,\af_{\on}) \in \N^{\on}$, denote
$
|\af| := \af_1 + \cdots + \af_{\on}
$
and
$
x^\af := x_1^{\af_1} \cdots x_{\on}^{\af_{\on}}.
$
Each $\mc{F} \in \mt{S}^m(\cpx^{n})$ is indexed by the tuple $(i_1,\ldots, i_m)$.
As introduced in \cite{GPSTD}, we can equivalently
index $\mF$ by monomials $x^\af \in \N_m^{\on}$ as (denote $x_0:=1$)
\[
\mc{F}_{x^\af} := \mc{F}_{i_1, \ldots, i_m}
\quad \mbox{ whenever} \quad
x^\af = x_{i_1-1} \cdots x_{i_m-1}.
\]
For convenience, we also denote
\[
\mc{F}_{\af} \, := \, \mc{F}_{x^\af}.
\]
The dual space of $\mt{S}^m(\cpx^{n})$ is $\cpx[x]_m$,
the space of complex polynomials with degrees $\leq m$.
Each $\mc{F} \in \mt{S}^m(\cpx^{n})$ defines a linear function on $\cpx[x]_m$ as
\be \label{op:scrL:F}
\Big \langle \sum_{\af\in\N_m^{\on} } p_\af x^\af , \mc{F}
\Big \rangle   := \sum_{\af\in\N_m^{\on} } p_\af \mc{F}_\af.
\ee
%
%
For $p\in \cpx[x]_m$ and $\mF \in \mt{S}^m(\cpx^n)$,
$p$ is called a {\em generating polynomial} for $\mF$ if
\be \label{df:GRpq}
\langle p \cdot x^\bt, \mc{F} \rangle  = 0 \quad
\forall\, \bt \in \N_{m-\deg(p)}^{\bn}.
\ee

Generating polynomials are useful for
computing symmetric tensor decompositions (cf.~\cite{GPSTD}).
To compute low rank symmetric tensor approximations,
we need to use generating polynomials for rank-$r$ tensors.
Denote the monomial sets
\be \label{monls:grlex}
\mathbb{B}_0 := \big\{
\underbrace{
1, \, x_1, \, \ldots, \, x_{\bn}, \, x_1^2, \, x_1x_2, \, \ldots
}_{\mbox{ first $r$ monomials }}
\big\},
\ee
\be \label{mscrB12}
\mathbb{B}_1 := \big( \mathbb{B}_0 \cup x_1 \mathbb{B}_0
\cup \cdots \cup x_{\bn} \mathbb{B}_0)
\backslash \mathbb{B}_0.
\ee
For convenience, by $\bt \in \mathbb{B}_0$ (resp., $\af \in \mathbb{B}_1)$,
we mean that $x^\bt \in \mathbb{B}_0$ (resp., $x^\af \in \mathbb{B}_1)$.
Let $\cpx^{ \mathbb{B}_0 \times \mathbb{B}_1 }$ be the space of all complex matrices
indexed by $ (\bt, \af)  \in \mathbb{B}_0 \times \mathbb{B}_1$.
For $\af \in \mathbb{B}_1$ and $G \in \cpx^{ \mathbb{B}_0 \times \mathbb{B}_1 }$,
denote the polynomial in $x$
\be \label{vphi:W-af}
\varphi[G, \af] := \sum_{ \bt  \in \mathbb{B}_0 }
G(\bt,\af)  x^\bt - x^\af.
\ee
If each $\varphi[G, \af]$ ($\af \in \mathbb{B}_1$)
is a generating polynomial for $\mF$,
then $G$ is called a {\it generating matrix} for $\mF$.
Generating matrices as above exist for generic tensors of rank $r$ (cf.~\cite{GPSTD}).

For each $i=1,\ldots, \on$, define the matrix
$M_{x_i}(G) \in \cpx^{\mathbb{B}_0 \times \mathbb{B}_0}$ as:
\be \label{df:Mxi(W)}
M_{x_i}(G)_{\mu, \nu} =
\bca
1  &  \text{ if } x_i \cdot x^\nu \in \mathbb{B}_0, \, \mu = \nu + e_i, \\
0  &  \text{ if } x_i \cdot x^\nu \in \mathbb{B}_0, \, \mu \ne \nu + e_i, \\
G(\mu, \nu+e_i)  &  \text{ if } x_i \cdot x^\nu \in \mathbb{B}_1.
\eca
\ee
The polynomials $\varphi[G, \af]$
have $r$ common zeros (counting multiplicities) if and only if the matrices
$M_{x_1}(G)$, $\ldots$, $M_{x_{\on}}(G)$ commute (cf. \cite{GPSTD}).

\subsection{Nonsymmetric tensors}
\label{sc:GR:nsym}

To define generating polynomials for
nonsymmetric tensors in $\cpx^{n_1 \times \cdots \times n_m}$,
we need a different indexing by multi-linear monomials.

\subsubsection{Definition}

For each $j=1,\ldots,m$, denote
\be \label{barnj:xj}
\on_j := n_j - 1, \quad
\bfx_j \, := \, (x_{j,1}, \ldots, x_{j,\on_j}).
\ee
Let
$
\bfx \, := \, (\bfx_1, \ldots, \bfx_m),
$
the vector of indeterminants
\[
x_{1,1}, \ldots, x_{1,\on_1}, \, \quad x_{2,1}, \ldots, x_{2,\on_2}, \, \quad
\, \ldots, \, \quad  x_{m,1}, \ldots, x_{m,\on_m}.
\]
For convenience, denote
\[
x_{1,0} := 1, \,  x_{2,0} := 1, \, \ldots, \, x_{m,0} := 1,
\]
\be \label{mfrak:mM}
\mathbb{M}:= \Big\{ x_{1,i_1} \cdots x_{m,i_m} \mid
0\leq i_j \leq \on_j, \, 1\leq j \leq m
\Big\}, \quad
\mc{M} = \spn \{ \mathbb{M} \}.
\ee
Each monomial in $\mathbb{M}$ is linear in all $\bfx_j$.
For $J \subseteq \{1,\ldots, m\}$, denote
\[
J^c := \{1,\ldots,m\} \backslash J,
\]
\be \label{mfrak:mJ}
\mathbb{M}_J:= \Big\{ x_{1,i_1} \cdots x_{m,i_m} \mid
i_j = 0 \, \forall \, j \in J^c
\Big\}, \quad
\mc{M}_J:= \spn\{ \mathbb{M}_J \}.
\ee
Each $\mc{F} \in \cpx^{n_1 \times \cdots \times n_m}$
is indexed by $(i_1+1,\ldots,i_m+1)$ with
\[
0\leq i_1 \leq \on_1, \, \ldots,\, 0\leq i_m \leq \on_m.
\]
Since $(i_1+1,\ldots,i_m+1)$ is uniquely determined by
the multi-linear monomial $x_{1,i_1} \cdots x_{m,i_m} \in \mathbb{M}$,
we can equivalently index $\mc{F}$ as
\[
\mc{F}_{ x_{1,i_1} \cdots x_{m,i_m} } :=  \mc{F}_{i_1+1,\ldots,i_m+1}.
\]
The tensors in $\cpx^{n_1 \times \cdots \times n_m}$
can be equivalently indexed by multi-linear monomials in $\mathbb{M}$.
For $\mc{F} \in \cpx^{n_1 \times \cdots \times n_m}$,
define the operation on polynomials in $\mc{M}$ as
\be \label{<p,F>:nsym}
\big\langle \sum_{\mu \in \mathbb{M} } c_\mu \mu, \mc{F} \big \rangle
\, := \, \sum_{\mu \in \mathbb{M} }  c_\mu \mc{F}_\mu.
\ee
In the above, each $c_\mu$ is a complex scalar.

\begin{defi}  \label{grdef:nsym}
For $J \subseteq \{1,\ldots,m\}$ and
$\mc{F} \in \cpx^{n_1 \times \cdots \times n_m}$,
we call $p \in \mc{M}_J$ a
{\it generating polynomial} for $\mc{F}$ if
\be \label{df:gr:nsym}
\langle pq, \mc{F}   \rangle = 0 \quad \forall\, q \in \mathbb{M}_{J^c}.
\ee
\end{defi}

\begin{exm}
\label{exm:gprk1:ns}
Consider the rank-$1$ tensor $\mc{F} = \lmd ( a \otimes b \otimes c)$ with
$\lmd \in \cpx$,
\[
a = (1, a_1, \ldots, a_{\on_1} ), \quad b = (1, b_1, \ldots, b_{\on_2} ),
\quad c = (1, c_1, \ldots, c_{\on_3} ).
\]
For all $i,j,k\geq 1$, the polynomials
\[
f_i :=x_{1,i} - a_i, \quad g_j :=x_{2,j} - b_j,  h_k :=x_{3,k} - c_k
\]
are all generating polynomials for $\mc{F}$. This is because
\[
\langle (x_{1,i} - a_i)x_{2,j}x_{3,k}, \mc{F}   \rangle =
(a_i-a_i) b_j c_k = 0 \quad \forall \, j,\, k \geq 1,
\]
\[
\langle (x_{2,j} - b_j)x_{1,i}x_{3,k}, \mc{F}   \rangle =
(b_j - b_j) a_i c_k = 0 \quad \forall \, i, \, k \geq 1,
\]
\[
\langle (x_{3,k} - c_k)x_{1,i}x_{2,j}, \mc{F}   \rangle =
(c_k - c_k) a_i b_j = 0 \quad \forall \, i, \, j \geq 1.
\]
\end{exm}

\begin{exm}
Consider the rank-$2$ tensor
\[
\mc{F} = \lmd (a \otimes b \otimes c )  + \mu ( d \otimes e \otimes f)
\]
with $\lmd, \mu \in \cpx$, $a,b,c$ as in Example~\ref{exm:gprk1:ns}, and
\[
d = (1, d_1, \ldots, d_{\on_1} ), \quad e = (1, e_1, \ldots, e_{\on_2} ),
\quad f = (1, f_1, \ldots, f_{\on_3} ).
\]
One can check that for all $i,j,k\geq 1$, the products
\[
(x_{1,i} - a_i)(x_{2,j} - e_j), \quad (x_{1,i} - a_i)(x_{3,k} - f_k),
\]
\[
(x_{2,j} - b_j)(x_{1,i}-d_i), \quad (x_{2,j} - e_j)(x_{3,k}-f_k),
\]
\[
(x_{3,k}-c_k)(x_{1,i}-d_i), \quad (x_{3,k}-c_k)(x_{2,j}-ej)
\]
are generating polynomials for $\mF$. For instance, one can check that
\[
\langle (x_{1,i} - a_i)(x_{2,j} - e_j) x_{3,k}, \mF  \rangle =
\lmd \langle (x_{1,i} - a_i)(x_{2,j} - e_j) x_{3,k}, a \otimes b \otimes c  \rangle  +
\]
\[
+ \mu
\langle (x_{1,i} - a_i)(x_{2,j} - e_j) x_{3,k}, d \otimes e \otimes f  \rangle = 0.
\]
\end{exm}

\subsubsection{Properties}

For convenience, denote
\be \label{nota:r1:J}
\left\{ \baray{c}
r_1 \, := \,r - 1,  \quad  \, [r_1]:=\{0,1,\ldots,r_1\}, \\
J := \Big\{ (i,j,k):  \,\,
0\leq i \leq r_1, \quad 2 \leq j \leq m, \quad 1 \leq k \leq \on_j
\Big\}.
\earay \right.
\ee
For $G \in \cpx^{[r_1] \times J}$ and
$\tau =(i,j,k)$, denote the polynomial
\be \label{vphi:ijk:w}
\varphi[G,\tau](\bfx) :=
\sum_{\ell=0}^{r_1} G(\ell, \tau) x_{1,\ell}  - x_{1,i} x_{j,k}.
\ee
The rows of $G$ are indexed by $\ell = 0, 1, \ldots, r_1$
and the columns are indexed by $\tau \in J$.
If $\varphi[G,\tau]$ is a generating polynomial for $\mc{F}$, then
\[
\langle \varphi[G,\tau] \cdot \mu,  \mc{F}  \rangle  = 0,
\quad \forall \, \mu \in \mathbb{M}_{ \{1,j\}^c }.
\]
The above gives linear equations ($\tau =(i,j,k)$)
\be \label{F:ijk:w=0}
\sum_{\ell=0}^{r-1} \mc{F}_{x_{1,\ell} \cdot \mu }
G(\ell, \tau) = \mc{F}_{x_{1,i} \cdot x_{j,k} \cdot \mu }
\quad \forall \, \mu \in \mathbb{M}_{ \{1,j\}^c }.
\ee

\begin{defi} \label{df:GM:nsym}
For $\mF \in \cpx^{n_1 \times \cdots \times n_m}$ and $G \in \cpx^{[r_1] \times J}$,
if \reff{F:ijk:w=0} is satisfied for all $\tau \in J$,
then $G$ is called a {\it generating matrix} for $\mF$.
\end{defi}

Generating matrices as above exist for general rank-$r$ tensors. Suppose
\be \label{dc:F=sum:u1:ur}
\mc{F} =  \sum_{s=1}^r
u^{s,1} \otimes u^{s,2} \otimes \cdots \otimes u^{s,m}.
\ee
For convenience, we index vectors in $\cpx^{n_j}$ by $i=0,1, \ldots, \bn_j$.
Up to a scaling, we generally assume that
\be \label{u:ell:j:0=1}
(u^{s,2})_0 = \cdots = (u^{s,m})_0 = 1.
\ee
For convenience, denote
\be \label{nota:usj:1:r1}
\big(u^{s, j}\big)_{0:r_1}  :=
\bbm (u^{s,j})_0 \\  (u^{s,j})_1 \\ \vdots \\  (u^{s,j})_{r_1}  \ebm, \quad
u^{(s)}\,  := \, (u^{s,1}, \ldots,  u^{s,m} ),
\ee
\be \label{nota:U:U1}
U = \Big( [u^{(1)}]_{ \mathbb{M} } \quad \cdots \quad [u^{(r)}]_{ \mathbb{M} }
\Big), \quad
U_1 = \Big( \big(u^{1, 1}\big)_{0:r_1} \quad \cdots \quad \big(u^{r, 1}\big)_{0:r_1}
\Big).
\ee
In the above, $[u^{(s)}]_{ \mathbb{M} }$ denotes the vector
that is the evaluation of the monomials in $\mathbb{M}$
at the tuple $u^{(s)}=(u^{s,1}, \ldots, u^{s,r})$.
The rows of $U$ are index by monomials in $\mathbb{M}$.
Note that $U_1$ is a $r\times r$ square matrix,
consisting of the first $r$ rows of $U$. Clearly,
if $\rank\, U_1 = r$, then each row of $U$
is a linear combination of the rows of $U_1$.
So, for the ($x_{1,i}x_{j,k}$)-th row,
there exists a vector $w^{i,j,k} \in \cpx^{r}$ such that
\[
U(x_{1,i}x_{j,k}, :) = \sum_{\ell=0}^{r-1}
(w^{i,j,k})_\ell  U(x_{1,\ell}, :).
\]
For each $s = 1,\ldots, m$, \reff{nota:U:U1} and the above imply that
\[
(u^{s,1})_i (u^{s,j})_k = \sum_{\ell=0}^{r-1}
(w^{i,j,k})_{\ell}  (u^{s,1})_{\ell} ,
\]
\[
(u^{(s)})^{\mu}  (u^{s,1})_i (u^{s,j})_k = \sum_{\ell=0}^{r-1}
(w^{i,j,k})_{\ell}    (u^{(s)})^{\mu}   (u^{s,1})_\ell
\quad \forall \, \mu \in \mathbb{M}_{\{1,j\}^c }.
\]
Let $G \in \cpx^{[r_1] \times J}$ be the matrix such that
\be \label{G(tau)=wijk}
G(\ell,\tau) = (w^{i,j,k})_{\ell} \quad
\big(0\leq \ell \leq r_1, \ \tau = (i,j,k) \in J \big).
\ee
Then, one can verify that
\[
\langle \varphi[G,\tau]\cdot \mu,
u^{s,1} \otimes \cdots \otimes u^{s,m} \rangle = 0
\quad \forall \, \mu \in \mathbb{M}_{\{1,j\}^c },
\]
\[
\langle \varphi[G,\tau]\cdot \mu,  \mc{F} \rangle = 0
\quad \forall \, \mu \in \mathbb{M}_{\{1,j\}^c }.
\]
That is, each $\varphi[G,\tau]$ is a generating polynomial for $\mc{F}$.
Hence, we get the following theorem.

\begin{theorem} \label{thm:exist:wijk}
Let $\mc{F}$ be as in \reff{dc:F=sum:u1:ur}-\reff{u:ell:j:0=1},
$U_1$ be as in \reff{nota:U:U1}. Suppose $\rank \, U_1 =r$.
If $G$ is given as in \reff{G(tau)=wijk}, then
$G$ is a generating matrix for $\mF$.
\end{theorem}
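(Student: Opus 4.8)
The plan is to verify directly that each polynomial $\varphi[G,\tau]$, with $G$ defined by \reff{G(tau)=wijk}, satisfies the defining condition \reff{df:gr:nsym} of a generating polynomial for $\mc{F}$, which is exactly the system of linear equations \reff{F:ijk:w=0}. Since $\mc{F}$ is the sum in \reff{dc:F=sum:u1:ur}, and the pairing $\langle \cdot, \mc{F}\rangle$ is linear in $\mc{F}$, it suffices to establish the identity first for each rank-one summand $u^{s,1}\otimes\cdots\otimes u^{s,m}$ and then sum over $s=1,\ldots,r$.

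First I would unpack the notation: for a tuple $u^{(s)}=(u^{s,1},\ldots,u^{s,m})$ and a monomial $\mu = x_{1,i_1}\cdots x_{m,i_m}\in\mathbb{M}$, the evaluation $(u^{(s)})^\mu$ equals $(u^{s,1})_{i_1}\cdots(u^{s,m})_{i_m}$, and this is precisely the $(\imath_1+1,\ldots,\imath_m+1)$ entry of the rank-one tensor $u^{s,1}\otimes\cdots\otimes u^{s,m}$, i.e., its $\mu$-coordinate under the monomial indexing of \S\ref{sc:GR:nsym}. Hence $[u^{(s)}]_{\mathbb{M}}$ is the monomial-indexed coordinate vector of that rank-one tensor, and $U$ is the matrix whose columns are these coordinate vectors, so $\mc{F} = \sum_{s=1}^r u^{s,1}\otimes\cdots\otimes u^{s,m}$ corresponds to the vector $U\mathbf{1}$.

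Next comes the core linear-algebra step. Since $U_1$ is the $r\times r$ submatrix of $U$ formed by the rows indexed by $1, x_{1,1},\ldots,x_{1,r_1}$ (i.e., $x_{1,\ell}$ for $\ell=0,\ldots,r_1$) and $\rank U_1 = r$, the rows of $U_1$ form a basis of $\cpx^r$; therefore every row of $U$ is a unique linear combination of these $r$ rows. Applying this to the row indexed by $x_{1,i}x_{j,k}$ (for $\tau=(i,j,k)\in J$) yields a vector $w^{i,j,k}\in\cpx^r$ with $U(x_{1,i}x_{j,k},:) = \sum_{\ell=0}^{r_1}(w^{i,j,k})_\ell\, U(x_{1,\ell},:)$. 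Reading this off column by column gives, for each $s$, the scalar identity $(u^{s,1})_i(u^{s,j})_k = \sum_{\ell=0}^{r_1}(w^{i,j,k})_\ell (u^{s,1})_\ell$. Multiplying both sides by $(u^{(s)})^\mu$ for any $\mu\in\mathbb{M}_{\{1,j\}^c}$ — which is legitimate because $\mu$ involves none of the variables $x_{1,\cdot}$ or $x_{j,\cdot}$, so the product $(u^{(s)})^\mu(u^{s,1})_i(u^{s,j})_k$ is exactly the coordinate $(u^{(s)})^{x_{1,i}x_{j,k}\mu}$, and likewise on the right — gives $\langle \varphi[G,\tau]\cdot\mu, u^{s,1}\otimes\cdots\otimes u^{s,m}\rangle = 0$, where $G$ is defined by $G(\ell,\tau) = (w^{i,j,k})_\ell$ as in \reff{G(tau)=wijk}. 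Summing over $s$ and using linearity in $\mc{F}$ gives $\langle\varphi[G,\tau]\cdot\mu,\mc{F}\rangle = 0$ for all $\mu\in\mathbb{M}_{\{1,j\}^c}$, which is precisely \reff{F:ijk:w=0}; hence $G$ is a generating matrix for $\mc{F}$ by Definition~\ref{df:GM:nsym}.

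I do not anticipate a serious obstacle here — the argument is essentially bookkeeping. The one point requiring care is the bijective correspondence between the monomial indexing and the standard tensor indexing, specifically checking that multiplication of a monomial $\mu\in\mathbb{M}_{\{1,j\}^c}$ by $x_{1,i}x_{j,k}$ lands back in $\mathbb{M}$ and corresponds correctly to the product of coordinates (this uses that $\mu$ has degree $0$ in the $x_{1,\cdot}$ and $x_{j,\cdot}$ blocks, so no "collision" of variables occurs and the resulting monomial is still multilinear in each block). Once that identification is pinned down, everything else is the rank condition on $U_1$ propagating row relations from $U_1$ to all of $U$, and then transporting those relations through the pairing by linearity.
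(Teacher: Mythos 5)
Your proof is correct and follows essentially the same route as the paper: the paper derives the theorem by exactly this argument (express the $(x_{1,i}x_{j,k})$-row of $U$ in terms of the rows of $U_1$ using $\rank\,U_1=r$, read off the scalar identity per column $s$, multiply by $(u^{(s)})^\mu$ for $\mu\in\mathbb{M}_{\{1,j\}^c}$, and sum over $s$ by linearity) in the paragraph immediately preceding the theorem statement. Your added care about why the multiplication by $\mu$ corresponds correctly under the monomial indexing is a reasonable explicitation of bookkeeping the paper leaves implicit, but it is not a different method.
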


For a general rank-$r$ tensor $\mc{F}$,
we typically have $\rank \, U_1 =r$. Otherwise,
it can be satisfied by applying a general linear transformation.
For each pair $(j,k)$, denote
%
%
{\smaller
\be \label{df:Mjk}
M^{j,k}(G) :=
\bbm
G(0, (0,j,k)) & G(1, (0,j,k)) & \cdots & G(r_1, (0,j,k))  \\
G(0, (1,j,k)) & G(1, (1,j,k)) & \cdots & G(r_1, (1,j,k)) \\
 \vdots & \vdots & \ddots  & \vdots \\
G(0, (r_1,j,k)) & G(1, (r_1,j,k)) & \cdots & G(r_1, (r_1,j,k))
\ebm.
\ee
}

\begin{prop} \label{prop:Mjk:eig}
In Theorem~\ref{thm:exist:wijk}, for $G$ as in \reff{G(tau)=wijk}, we have
\be \label{eigsym:Mjk}
M^{j,k}(G) \cdot \big(u^{s,1}\big)_{0:r_1} = ( u^{s,j})_k  \cdot
\big(u^{s,1}\big)_{0:r_1}
\ee
for all $s=1,\ldots,r$, $j=2,\ldots,m$ and $0 \leq k \leq \on_j$.
\end{prop}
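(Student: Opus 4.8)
The plan is to unwind the definition of $M^{j,k}(G)$ entrywise, substitute the formula \reff{G(tau)=wijk} for the entries of $G$, and recognize the resulting componentwise identities as precisely the linear relations among the rows of $U$ that were used to \emph{define} the vectors $w^{i,j,k}$ in the paragraph preceding Theorem~\ref{thm:exist:wijk}. In other words, the proposition is nothing but those row-dependence relations of $U$, rewritten in matrix form.

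First I would fix $j\in\{2,\ldots,m\}$ and $k\in\{1,\ldots,\on_j\}$ and read off from \reff{df:Mjk} that the $(i,\ell)$-entry of $M^{j,k}(G)$ equals $G(\ell,(i,j,k))$ for $0\leq i,\ell\leq r_1$; by \reff{G(tau)=wijk} this is $(w^{i,j,k})_\ell$. Consequently, for each $s$, the $i$-th component of the vector $M^{j,k}(G)\cdot\big(u^{s,1}\big)_{0:r_1}$ is $\sum_{\ell=0}^{r_1}(w^{i,j,k})_\ell (u^{s,1})_\ell$.

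Second I would invoke the scalar identity established just before Theorem~\ref{thm:exist:wijk} (this is exactly where the hypothesis $\rank\,U_1=r$ enters): for every $s=1,\ldots,r$ and every $\tau=(i,j,k)\in J$,
\[
(u^{s,1})_i (u^{s,j})_k \;=\; \sum_{\ell=0}^{r-1} (w^{i,j,k})_\ell (u^{s,1})_\ell .
\]
Putting the two observations together, the $i$-th component of $M^{j,k}(G)\cdot\big(u^{s,1}\big)_{0:r_1}$ equals $(u^{s,j})_k\,(u^{s,1})_i$, which is precisely the $i$-th component of $(u^{s,j})_k\big(u^{s,1}\big)_{0:r_1}$; since this holds for all $i\in\{0,\ldots,r_1\}$, we obtain \reff{eigsym:Mjk} whenever $k\geq 1$.

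Finally I would handle the boundary value $k=0$, which is not covered by \reff{df:Mjk} since $(i,j,0)\notin J$: here $x_{1,i}x_{j,0}=x_{1,i}$, so one reads $M^{j,0}(G)=I$ (the corresponding relation being $w^{i,j,0}=e_i$), and since the normalization \reff{u:ell:j:0=1} gives $(u^{s,j})_0=1$, the equality \reff{eigsym:Mjk} is then trivial. The only thing requiring care throughout is the indexing bookkeeping — which slot of $G$, and which slot of $M^{j,k}(G)$, is a row and which a column, plus this $k=0$ convention — so I do not anticipate a genuine obstacle: once the conventions are pinned down the statement is immediate, the content being a restatement of the defining relations of the $w^{i,j,k}$ rather than a computation.
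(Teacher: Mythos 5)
Your proof is correct and is exactly the ``direct verification'' the paper alludes to: unwind \reff{df:Mjk} to see the $(i,\ell)$-entry of $M^{j,k}(G)$ is $(w^{i,j,k})_\ell$, then recognize $\sum_{\ell}(w^{i,j,k})_\ell(u^{s,1})_\ell=(u^{s,1})_i(u^{s,j})_k$ as the row-dependence relation of $U$ used to define $w^{i,j,k}$. Your careful note on $k=0$ (where $(i,j,0)\notin J$, so $M^{j,0}(G)$ must be read as the identity, with the equality reducing to the normalization $(u^{s,j})_0=1$) is a legitimate edge case that the paper's terse proof glosses over; it is resolved exactly as you say.
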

\begin{proof}
This can be done by a direct verification.
\end{proof}

By \reff{eigsym:Mjk}, each $( u^{s,j})_k $ ($j=2,\ldots,m$)
is a common eigenvalue of $M^{j,k}(G)$,
with $\big(u^{s,1}\big)_{0:r_1}$ a common eigenvector.
Therefore, if the generating matrix $G$
is known, we can get $u^{s,j}$ from
the common eigenvalues and eigenvectors of $M^{j,k}(G)$.

\section{Low rank approximations for symmetric tensors}
\label{sc:lra:sym}
\setcounter{equation}{0}

Given a symmetric tensor $\mc{F} \in \mt{S}^m(\cpx^n)$ and a rank $r$,
we are looking for $\mc{X} \in \mt{S}^m(\cpx^n)$,
such that its symmetric rank $\rank_S(\mc{X}) \leq r$
and it is close to $\mc{F}$ as much as possible.
We refer to \S\ref{sbsc:symtsr} for symmetric ranks of symmetric tensors.
Clearly, $\rank_S(\mc{X}) \leq r$ if and only if
$\mc{X} = (u_1)^{\otimes m} + \cdots + (u_r)^{\otimes m}$
for vectors $u_1, \ldots, u_r \in \cpx^n$.
Finding such $\mc{X}$ is equivalent to solving the nonlinear least squares problem
\be \label{apx:LS:u}
\min_{ u_1, \ldots, u_r \in \cpx^n } \quad
\left \|  (u_1)^{\otimes m} + \cdots
+ (u_r)^{\otimes m} - \mF \right \|^2.
\ee
Throughout the paper, for symmetric tensors,
by rank-$r$ approximation we mean that
the approximating tensor $\mc{X}$ is symmetric and
$\rank_S\,(\mc{X}) \leq r$.

It is typically hard to solve \reff{apx:LS:u} directly.
For the basic case $r=1$ (i.e., best rank-$1$ approximations),
the problem is already NP-hard (cf.~\cite{HiLi13}).
We propose to compute rank-$r$ approximations by using generating polynomials
(cf.~\S\ref{sbsc:gr:sym}).
For convenience,  we index vectors in $\cpx^n$
by $i =0, 1,\ldots, \bn$, where \[ \bn := n-1. \]
For $u \in \cpx^n $ with $(u)_0 \ne 0$,
we can write it equivalently as
\[
u = \lmd^{1/m} v, \quad \lmd \in \cpx, \quad (v)_0 = 1, \, v \in \cpx^n.
\]
Then, $u^{\otimes m}  = \lmd  v^{\otimes m}$.
So, \reff{apx:LS:u} can be reformulated as
\be \label{apx:LS:lmd*v}
\min_{
\substack{v_1, \ldots, v_r \in \cpx^n, \lmd_1, \ldots, \lmd_r \in \cpx \\
(v_1)_0 = \cdots = (v_r)_0 = 1  }
} \quad
\| \lmd_1 (v_1)^{\otimes m} + \cdots + \lmd_r (v_r)^{\otimes m}
- \mF \|^2.
\ee
In this section, we propose to solve \reff{apx:LS:lmd*v} in three major stages:
\bit

\item Find an approximate generating matrix $G$ for $\mc{F}$.

\item Construct $v_1,\ldots,v_r$ from the polynomials $\varphi[G,\af]$
(see \reff{vphi:W-af}).

\item Determine $\lmd_1,\ldots,\lmd_r$
by solving the resulting least squares.

\eit

\subsection{An algorithm for rank-$r$ approximation}
\label{sc:lrap:alg}

Let $\mathbb{B}_0,\mathbb{B}_1$ be the monomial sets as in
\reff{monls:grlex}-\reff{mscrB12}.
By $\af \in \mathbb{B}_1$ we mean that $x^\af \in \mathbb{B}_1$.
We index symmetric tensors by monomials, or equivalently
by monomial powers (cf.~\S\ref{sbsc:gr:sym}).
For a matrix $G \in \cpx^{\mathbb{B}_0 \times \mathbb{B}_1}$
and $\af \in \mathbb{B}_1$, the polynomial $\varphi[G,\af]$
as in \reff{vphi:W-af} is a generating polynomial for $\mc{F}$ if
\be \label{<gmWaf:F>=0}
\Big \langle x^\gm \varphi[G, \af], \mc{F} \Big\rangle = 0 \quad
\big( \forall\, \gm \in \N_{m-|\af|}^{\on} \big).
\ee
(See \reff{op:scrL:F} for the operation $\langle \cdot, \cdot \rangle$.)
A matrix $G$ satisfying \reff{<gmWaf:F>=0} may, or may not, exist.
But we can always find an approximate one
by solving the linear least squares
\be \label{ls:rc:Waf}
\min_{ G \in \cpx^{\mathbb{B}_0 \times \mathbb{B}_1} } \quad
\sum_{ \af \in \mathbb{B}_1 }  \sum_{ \gm \in  \N_{m-|\af|}^{\on} }
\Big| \big
\langle x^\gm \varphi[G,\af], \mc{F}
\big\rangle \Big|^2.
\ee
Denote the matrix $A[\mc{F},\af]$ and the vector $b[\mc{F},\af]$ as
\be \label{df:Ab[F,af]}
\left\{ \baray{lcl}
A[\mc{F},\af]_{\gm , \bt} &=& \mc{F}_{\bt+\gm}, \quad
\forall \,  (\gm, \bt) \in \N_{m-|\af|}^{\on} \times \mathbb{B}_0, \\
b[\mc{F},\af]_{\gm} &=& \mc{F}_{\af+\gm}, \quad
\forall \,  \gm \in \N_{m-|\af|}^{\on}.
\earay\right.
\ee
The matrix $G$ is indexed by $(\bt, \af) \in \mathbb{B}_0 \times \mathbb{B}_1$.
Denote by $G(:,\af)$ the $\af$-th column of $G$.
So, \reff{ls:rc:Waf} can be reformulated as
\be \label{ls:Aw=b:af}
\min_{ G \in \cpx^{\mathbb{B}_0 \times \mathbb{B}_1} } \quad
\sum_{ \af \in \mathbb{B}_1 }
\Big\| A[\mc{F},\af] \, G(:,\af) - b[\mc{F},\af] \Big\|_2^2.
\ee
Let $G^{ls}$ be an optimal solution to \reff{ls:Aw=b:af}.
%
%
When $\mF$ is close to $\sig_r^{m,n}$,
the set of tensors in $\mt{S}^m(\cpx^n)$ whose symmetric
border ranks $\leq r$ (cf.~\S\ref{sbsc:symtsr}), the polynomial system
\be \label{vpiWLS(x)=0}
\varphi[G^{ls}, \af](x) = 0 \quad (\af \in \mathbb{B}_1)
\ee
is expected to approximately have $r$ solutions.
In practice, we can find them
by using the method in \cite{CGT97} (also see \cite{GPSTD}).
Choose generic $\xi_1>0,\ldots, \xi_{\bn}>0$ and scale them as
$\xi_1 +\cdots + \xi_{\bn} = 1$. Let
\be \label{M(xiWls)}
M(\xi,G^{ls}) := \xi_1 M_{x_1}(G^{ls}) +
\cdots +  \xi_{\on} M_{x_{\on}}(G^{ls}),
\ee
where $M_{x_i}(G)$ is defined as in \reff{df:Mxi(W)}.
Then, compute the Schur Decomposition
\be  \label{Schur:MWls}
Q^* M(\xi,G^{ls}) Q \, = \, T,
\ee
where $Q=[q_1 \, \ldots \, q_r ]$ is unitary and
$T$ is upper triangular.
(The superscript $^*$ denotes the conjugate transpose.)
For $i=1,\ldots,r$, let
\be \label{hatv:ls:aprx}
v_i^{ls} :=
\left(1, \,\, q_i^* M_{x_1}(G^{ls})q_i, \, \, \ldots, \, \,
q_i^* M_{x_{\on}}(G^{ls}) q_i \right).
\ee
Then, we solve the linear least squares
\be  \label{LS:vls*lmd=F}
\min_{ \substack{ (\lmd_1,\ldots,\lmd_r) \in \cpx^r } } \quad
\| \lmd_1 (v_1^{ls}) ^{\otimes m} + \cdots +
\lmd_r (v_r^{ls})^{\otimes m} -\mc{F} \|^2.
\ee
Let $\lmd^{ls}:=(\lmd_1^{ls}, \ldots, \lmd_r^{ls})$
be an optimal solution to \reff{LS:vls*lmd=F}.
For $i=1,\ldots,r$, let $u_i^{ls} := \sqrt[m]{\lmd_i^{ls}} v_i^{ls}$.
Then, we construct the tensor
\be \label{Xls:sym}
\mc{X}^{gp} := (u_1^{ls})^{\otimes m}+ \cdots + (u_r^{ls})^{\otimes m}.
\ee

The low rank tensor $\mc{X}^{gp}$ as in \reff{Xls:sym}
is expected to be a good rank-$r$ approximation,
when $\mF$ is close to a rank-$r$ tensor.
Its quality is analyzed in Theorem~\ref{thm:lrkapx:err}.
Our main conclusion is that if $\mF$ is close enough to a rank-$r$ tensor,
then $\mc{X}^{gp}$ is a good enough rank-$r$ approximation.
The tensor $\mc{X}^{gp}$ is not mathematically guaranteed to be a best
rank-$r$ approximation. However, we can always improve it
by using classical nonlinear least squares methods.
This often can be done efficiently, because
$\mc{X}^{gp}$ is typically a good approximation.

\medskip

The above leads to the following algorithm.

\begin{alg} \label{alg:tsr:aprox}
Symmetric rank-$r$ approximations for symmetric tensors. \\
For given $\mc{F} \in \mt{S}^m(\cpx^n)$ and given $r$,
do the following:
\bit

\item [Step 1] Solve \reff{ls:Aw=b:af} for a
least square solution $G^{ls}$.

\item [Step 2] Choose generic $\xi_i>0$
(scale as $\sum_i \xi_i= 1$), and
formulate $M(\xi,G^{ls})$ as in \reff{M(xiWls)}.
Compute the Schur Decomposition \reff{Schur:MWls}.

\item [Step 3] Compute $v_1^{ls},\ldots, v_r^{ls}$ as in
\reff{hatv:ls:aprx}, and solve \reff{LS:vls*lmd=F} for
$(\lmd_1^{ls}, \ldots, \lmd_r^{ls})$.

\item [Step 4] Construct the tensor $\mc{X}^{gp}$ as in \reff{Xls:sym}.

\item [Step 5] Solve \reff{apx:LS:u} for
an improved solution $(u_1^{opt},\ldots, u_r^{opt})$,
with the starting point $(u_1^{ls},\ldots, u_r^{ls})$.
Output the tensor
\be \label{Xgr:sym}
\mc{X}^{opt} := (u_1^{opt})^{\otimes m}+ \cdots + (u_r^{opt})^{\otimes m}.
\ee

\eit

\end{alg}

\bigskip

\begin{remark} \label{est:rk:cat}
In Algorithm~\ref{alg:tsr:aprox}, the approximating rank $r$ need to be given.
In practice, such $r$ is often not known in advance.
However, we can estimate it by using the Catalecticant matrix $\mbox{Cat}(\mF)$.
Let $s$ be the smaller size of $\mbox{Cat}(\mF)$.
If $\mF = \mc{X} + \mc{E}$, then
\[
\mbox{Cat}(\mF) = \mbox{Cat}(\mc{X}) + \mbox{Cat}(\mc{E}).
\]
If $\mc{X}$ is a generic point of $\sig_r^{m,n}$ and $r \leq s$,
then $\rank_S (\mc{X}) = \rank \mbox{Cat}(\mc{X}) =r$,
by Lemma~\ref{pro:rk:Cat=S}. For such case,
$\rank_S (\mc{X})$ can be estimated by $\rank \mbox{Cat}(\mc{X})$.
When $\mc{E}$ is small, $\rank\,\mbox{Cat}(\mc{X})$
can be estimated by $\rank \,\mbox{Cat}(\mF)$.
In practice, we can do as follows:
compute the singular values of $\mbox{Cat}(\mF)$, say,
$\eta_1 \geq \eta_2 \geq \cdots $.
If $\eta_r \gg \eta_{r+1}$, then
such $r$ is a good estimate for $\rank_S (\mc{X})$.
%
%
\end{remark}

When $r=1$, we can get an explicit formula for $\mc{X}^{gp}$ in Algorithm~\ref{alg:tsr:aprox}.
For such case, $\mathbb{B}_0=\{1\}$, $\mathbb{B}_1= \{x_1, \ldots, x_n\}$,
and each $A[\mc{F}, x_i]$ is the vector
\[
a_i \, := \, (\mc{F}_\gm)_{ \gm \in \N_{m-1}^{\on} }.
\]
For $i=1,\ldots,\on$, let
\be \label{r=1:fml:taui}
\theta_i \, := \, a_i^*b[\mc{F},x_i] / a_i^*a_i.
\ee
Then, $v^{ls} \, = \, (1, \theta_1, \ldots, \theta_{\on})$ and (let $\theta_0=1$)
\be \label{r=1:fml:lmdls}
\lmd^{ls} =
\Big( \sum_{ 1 \leq i_1, \ldots, i_m \leq n}
\mc{F}_{i_1,\ldots,i_m}  \theta_{i_1-1}^* \cdots \theta_{i_m-1}^*
\Big)/ \| (v^{ls})^{\otimes m} \|_2^2.
\ee
So, the rank-$1$ tensor $\mc{X}^{gp} = \lmd^{ls} (v^{ls})^{\otimes m}$
can be computed by a closed formula.

\subsection{Performance analysis}
\label{sbsc:symerr}

We analyze the performance of Algorithm~\ref{alg:tsr:aprox}.
Suppose the tensor
\be \label{dcmp:Xbs}
\mc{X}^{bs} := (u_1^{bs})^{\otimes m} + \cdots + (u_r^{bs})^{\otimes m}
\ee
is a best, or an almost best, rank-$r$ approximation for $\mc{F}$. Let
\be \label{F=Xbs+E}
\mc{E} =  \mF - \mc{X}^{bs}, \quad \| \mc{E} \| = \eps.
\ee
Let $A[\mc{F},\af],\,b[\mc{F},\af]$ be as in \reff{df:Ab[F,af]},
then for each $\af \in \mathbb{B}_1$
\be \label{Ab:F=Xls+E}
\left\{ \baray{rcr}
A[\mc{F},\af] &=& A[\mc{X}^{bs},\af] + A[\mc{E},\af], \\
b[\mc{F},\af] &=& b[\mc{X}^{bs},\af] + b[\mc{E},\af].
\earay \right.
\ee
Recall that we index vectors in $\cpx^n$ by $j=0,1,\ldots,\bn$.
Suppose each $(u_i^{bs})_0 \ne 0$, so we can scale them as ($i=1,\ldots,r$)
\[
u_i^{bs} = (\lmd_i^{bs})^{1/m} \,  v_i^{bs} ,
\quad \lmd_i^{bs} \in \cpx, \quad (v_i^{bs})_0 = 1,  \quad v_i^{bs} \in \cpx^n.
\]
For convenience of notation, denote
\[
\overline{ v_i^{bs} } \, := \,
\bbm (v_i^{bs})_1 & \cdots & (v_i^{bs})_{\bn} \ebm^T.
\]
The tuple $(u_1^{bs},\ldots, u_r^{bs})$ is called {\it scaling-optimal} for $\mF$
if $(\lmd_1^{bs},\ldots, \lmd_r^{bs})$ is an optimizer of
the linear least squares
\be \label{opt-scal:vbs}
\min_{ (\lmd_1, \ldots, \lmd_r) \in \cpx^r } \quad
\| \lmd_1 (v_1^{bs})^{\otimes m} + \cdots +
\lmd_r (v_r^{bs})^{\otimes m} -\mc{F} \|^2.
\ee
It is reasonable to assume that $(u_1^{bs},\ldots, u_r^{bs})$
is scaling-optimal, because otherwise we can
replace it by a scaling-optimal one in the choice of $\mc{X}^{bs}$
which is assumed to be a best, or an almost best,
rank-$r$ approximation for $\mF$.

The quality of the low rank tensors
$\mc{X}^{gp}, \mc{X}^{opt}$ is estimated as follows.
For each $i$, the notation $[\overline{ v_i^{bs} }]_{\mathbb{B}_0}$
denotes the vector of monomials in the set $\mathbb{B}_0$
as in \reff{monls:grlex}, evaluated at the point $\overline{ v_i^{bs} }$.
Denote $\xi = \bbm \xi_1 & \cdots & \xi_{\bn} \ebm^T$.

\begin{theorem} \label{thm:lrkapx:err}
Let $\mc{F}, \mc{X}^{bs}, \mc{E}, u_i^{bs}, v_i^{bs}, \lmd_i^{bs}$ be as above.
Assume that
\bit

\item [i)] the vectors $[\overline{ v_1^{bs} }]_{\mathbb{B}_0},\ldots,
[\overline{ v_r^{bs} }]_{\mathbb{B}_0}$ are linearly independent;

\item [ii)] each matrix $A[\mF,\af] \,(\af \in \mathbb{B}_1)$ has full column rank;

\item [iii)] the tuple $(u_1^{bs},\ldots, u_r^{bs})$ is scaling-optimal for $\mF$;

\item [iv)] the scalars
$\xi^T \overline{ v_1^{bs} },\ldots, \xi^T \overline{ v_r^{bs} }$ are distinct from each other.

\eit
If $\mc{X}^{gp}, \mc{X}^{opt}$ are from Algorithm~\ref{alg:tsr:aprox}
and $\eps = \| \mc{E} \|$ is small enough, then
\be \label{err:F-Xrc:bs}
\| \mc{X}^{bs} - \mc{X}^{gp} \| = O(\eps), \quad
\| \mc{F} - \mc{X}^{opt} \| \leq \| \mc{F} - \mc{X}^{gp} \| = O(\eps),
\ee
where the constants in the above $O(\cdot)$ only depend on $\mF$ and $\xi$.
\end{theorem}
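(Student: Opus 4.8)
The plan is to track the error $\eps = \|\mc{E}\|$ through each of the three computational stages of Algorithm~\ref{alg:tsr:aprox}, using standard perturbation results for linear least squares and for Schur decompositions, and to show at each stage that the computed quantity differs from an ``exact'' reference quantity by $O(\eps)$. First I would fix notation for the reference objects: since $\mc{X}^{bs}$ is a rank-$r$ tensor with $v_i^{bs}$ satisfying hypothesis (i), Theorem~\ref{thm:exist:wijk} (in its symmetric incarnation, via \S\ref{sbsc:gr:sym}) produces an \emph{exact} generating matrix $G^{bs}$ for $\mc{X}^{bs}$; concretely, $G^{bs}(:,\af)$ solves $A[\mc{X}^{bs},\af]\,G^{bs}(:,\af) = b[\mc{X}^{bs},\af]$ for each $\af\in\mathbb{B}_1$. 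Hypothesis (ii), that each $A[\mF,\af]$ has full column rank, means that for $\eps$ small the same holds for $A[\mc{X}^{bs},\af] = A[\mF,\af] - A[\mc{E},\af]$, so the least-squares solution $G^{ls}(:,\af)$ of \reff{ls:Aw=b:af} is unique and, by the classical perturbation bound for full-rank least squares (a pseudoinverse estimate applied to \reff{Ab:F=Xls+E}), $\|G^{ls} - G^{bs}\| = O(\eps)$, with the implied constant depending only on the pseudoinverses $A[\mF,\af]^\dagger$, hence only on $\mF$.

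Next I would propagate this into the eigenvalue stage. The matrices $M_{x_i}(G)$ depend linearly (affinely) on the entries of $G$ by \reff{df:Mxi(W)}, so $\|M_{x_i}(G^{ls}) - M_{x_i}(G^{bs})\| = O(\eps)$, and likewise for the combination $M(\xi,G^{ls})$ in \reff{M(xiWls)}. By Proposition~\ref{prop:Mjk:eig}'s symmetric analogue the matrices $M_{x_i}(G^{bs})$ are simultaneously diagonalized by the eigenvectors associated to the points $\overline{v_i^{bs}}$, and $M(\xi,G^{bs})$ has eigenvalues $\xi^T\overline{v_i^{bs}}$, which by hypothesis (iv) are distinct; so $M(\xi,G^{bs})$ has a well-conditioned eigenbasis. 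Now I invoke the perturbation theory for the Schur decomposition of a matrix with distinct eigenvalues (e.g. the Bauer--Fike / Stewart-type bounds): for $\eps$ small the Schur vectors $q_i$ of \reff{Schur:MWls} can be ordered so that $\|q_i - q_i^{bs}\| = O(\eps)$, where $q_i^{bs}$ are the (orthonormalized) Schur vectors of $M(\xi,G^{bs})$. Plugging into \reff{hatv:ls:aprx} and using that $v\mapsto q^*M_{x_j}(G)q$ is a smooth function of $(q,G)$ gives $\|v_i^{ls} - v_i^{bs}\| = O(\eps)$ after matching the indices $i$.

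Finally, for the amplitude stage: the map $(\lmd_1,\dots,\lmd_r)\mapsto \sum_i \lmd_i (v_i)^{\otimes m}$ is a Vandermonde-type linear map in $\lmd$ whose matrix, by hypothesis (i), has full column rank at $v = v^{bs}$, hence also at $v = v^{ls}$ for $\eps$ small; so the least-squares problem \reff{LS:vls*lmd=F} has a unique solution depending Lipschitz-continuously on $(v^{ls},\mF)$. Comparing with the scaling-optimal reference problem \reff{opt-scal:vbs}, whose solution is exactly $\lmd^{bs}$ by hypothesis (iii), the perturbation bound yields $\|\lmd^{ls} - \lmd^{bs}\| = O(\eps)$. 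Taking $m$-th roots (valid locally away from zero, for $\eps$ small) gives $\|u_i^{ls} - u_i^{bs}\| = O(\eps)$, and then \reff{Xls:sym} together with the multilinearity of $u\mapsto u^{\otimes m}$ gives $\|\mc{X}^{gp} - \mc{X}^{bs}\| = O(\eps)$, which is the first estimate in \reff{err:F-Xrc:bs}. The triangle inequality $\|\mF - \mc{X}^{gp}\| \le \|\mF - \mc{X}^{bs}\| + \|\mc{X}^{bs} - \mc{X}^{gp}\| = \eps + O(\eps) = O(\eps)$ gives the middle bound, and $\|\mF - \mc{X}^{opt}\| \le \|\mF - \mc{X}^{gp}\|$ is immediate since $(u_i^{opt})$ is obtained in Step~5 by a descent method started from $(u_i^{ls})$, which can only decrease the objective \reff{apx:LS:u}.

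The main obstacle, and the part requiring the most care, is the Schur-decomposition step: one must (a) handle the \emph{ordering} ambiguity of the Schur vectors so that the index $i$ in $v_i^{ls}$ is correctly matched with the one in $v_i^{bs}$ — this is where hypothesis (iv), distinctness of the eigenvalues $\xi^T\overline{v_i^{bs}}$, is essential, and it is also why $\xi$ is chosen \emph{generically}; and (b) control the non-normality of $M(\xi,G^{bs})$, since the relevant perturbation constant involves the condition number of its eigenvector matrix, which must be shown finite under (i) and (iv) but cannot be bounded uniformly — hence the implied constants in \reff{err:F-Xrc:bs} depend on $\mF$ and $\xi$. The least-squares steps, by contrast, are routine once full column rank is granted by hypotheses (ii) and (i).
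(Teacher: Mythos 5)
Your overall strategy is the same as the paper's: pass from $\mc{X}^{bs}$ to an exact generating matrix $G^{bs}$, bound $\|G^{ls}-G^{bs}\|$ by full-rank least-squares perturbation (hypothesis (ii)), propagate the bound into $M(\xi,\cdot)$, invoke Schur perturbation theory (the paper cites \cite{KPC94} and \cite[Thm.~4.1]{Sun95}) with distinctness of eigenvalues from (iv), recover the $v_i$ to $O(\eps)$, and finish with a full-rank least-squares argument for $\lmd$ using (i) and (iii) and the triangle inequality. Two remarks. First, the reference for existence of $G^{bs}$ is Theorem~3.1 of \cite{GPSTD}, not Theorem~\ref{thm:exist:wijk}, which is stated only for nonsymmetric tensors in this paper; this is a citation detail.

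Second, and this is the genuine gap: you assert that plugging the exact Schur vectors $q_i^{bs}$ of $M(\xi,G^{bs})$ into \reff{hatv:ls:aprx} recovers $v_i^{bs}$, and that continuity of $(q,G)\mapsto q^* M_{x_j}(G)\,q$ then gives $v_i^{ls}=v_i^{bs}+O(\eps)$. The continuity is fine, but the base claim is not a smoothness fact --- it requires proof. The Schur vectors $q_i^{bs}$ are \emph{not} eigenvectors of the $M_{x_j}(G^{bs})$ (these matrices are in general non-normal), so the Rayleigh-type quantity $q_i^{bs,*}M_{x_j}(G^{bs})q_i^{bs}$ does not a priori equal an eigenvalue $(v_i^{bs})_j$. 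The paper establishes this by a simultaneous-triangularization argument: since (i) and (iv) force $M_{x_1}(G^{bs}),\ldots,M_{x_{\on}}(G^{bs})$ to be simultaneously diagonalizable by some $P$, and $M(\xi,G^{bs})$ has distinct eigenvalues, one factors $Q_1 R_1 = P D_0$ with $R_1$ upper triangular and $D_0$ diagonal, so that $Q_1^* M_{x_j}(G^{bs}) Q_1 = R_1 D_j R_1^{-1}$ is upper triangular with the \emph{same diagonal} as $D_j$. That is what identifies $q_i^{bs,*}M_{x_j}(G^{bs})q_i^{bs}$ with the $i$th eigenvalue $(v_{\sigma(i)}^{bs})_j$ for a single permutation $\sigma$ independent of $j$. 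Your proposal flags the ordering ambiguity and non-normality as the hard points, but does not supply the actual argument that \reff{hatv:ls:aprx} is exact in the unperturbed case, which is the step that makes the whole construction work.
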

\begin{proof} \,
By the condition i) and Theorem~3.1 of \cite{GPSTD},
there exists a generating matrix
$G^{bs} \in \cpx^{\mathbb{B}_0 \times \mathbb{B}_1}$ for $\mc{X}^{bs}$,
i.e., for all $\af \in \mathbb{B}_1$,
\[
\varphi[G^{bs},\af](\overline{ v_1^{bs} }) = \cdots =
\varphi[G^{bs},\af](\overline{ v_r^{bs} })  = 0,
\]
\[
A[\mc{X}^{bs},\af] \, G^{bs}(:,\af) = b[\mc{X}^{bs},\af].
\]
By \reff{F=Xbs+E}, for all $\af \in \mathbb{B}_1$, we have
\[
\big\| A[\mc{F},\af]-A[\mc{X}^{bs},\af] \big\|_F \leq \eps, \quad
\big\| b[\mc{F},\af]-b[\mc{X}^{bs},\af] \big\|_2 \leq \eps,
\]
\[
G^{ls}(:,\af) = \big(A[\mc{F},\af]\big)^\dag b[\mc{F},\af], \quad
G^{bs}(:,\af) = \big(A[\mc{X}^{bs},\af]\big)^\dag b[\mc{X}^{bs},\af].
\]
(The superscript $^\dag$ denotes the Moore-Penrose pseudoinverse.)
By the condition ii), for $\eps$ small enough, we have
\[
\| G^{ls}(:,\af) - G^{bs}(:,\af) \|_2 = O(\epsilon).
\]
The constant in the above $O(\epsilon)$ only depends on $\mF$. Hence,
$
\| G^{ls} - G^{bs} \|_F = O(\epsilon),
$
\[
\| M(\xi,G^{bs}) - M(\xi,G^{ls}) \|_F
= \| \sum_{i=1}^n \xi_i M_{x_i}(G^{bs}-G^{ls}) \|_F
\]
\[
\leq  \big(\sum_{i} \xi_i \big)\, \max_i \|  M_{x_i}(G^{bs}-G^{ls})  \|_F
\leq \|  G^{bs}-G^{ls}  \|_F = O(\epsilon).
\]
In the above, the last inequality follows from the definition
of $M_{x_i}(G)$ as in \reff{df:Mxi(W)}.
For each $i$, $(v_i^{bs})_1, \ldots, (v_i^{bs})_{\on}$
are respectively the eigenvalues of
\[
M_{x_1}(G^{bs}), \,  \ldots, \, M_{x_{\on}}(G^{bs}),
\]
with a common eigenvector (cf.~\cite[\S2]{GPSTD}).
%
%
The eigenvalues of $M(\xi, G^{bs})$ are
\[
\xi^Tv_1^{bs}, \ldots, \xi^T v_r^{bs}.
\]
By the condition iv), $M(\xi, G^{bs})$ does not have a repeated eigenvalue,
and so is $M(\xi, G^{ls})$ for $\eps>0$ small enough. For such $\eps$,
there exist a unitary matrix $Q_1$ and an upper triangular matrix $T_1$ such that
\be \label{SchurQTerr}
Q_1^* M(\xi, G^{bs}) Q_1 = T_1, \quad
\| Q - Q_1\|_F = O(\eps), \quad \|T-T_1\|_F = O(\eps).
\ee
This can be implied by \cite{KPC94} or \cite[Theorem~4.1]{Sun95}.
The constants in $O(\eps)$ of \reff{SchurQTerr}
depend on $G^{ls}$, and then eventually only depend on $\mF$ and $\xi$.

\medskip

The matrices $M(\xi, G^{bs})$ and $T_1$ have common eigenvalues.
Because $T_1$ is upper triangular and its diagonal entries are all distinct,
there exists an upper triangular nonsingular matrix $R_1$ such that
$
\Lambda_1 \, := \, R_1^{-1} T_1 R_1
$
is diagonal. This results in the eigenvalue decomposition
\[
(Q_1R_1)^{-1} M(\xi, G^{bs}) (Q_1R_1) = \Lambda_1.
\]
Since $\overline{ v_1^{bs} },\ldots, \overline{ v_r^{bs} }$ are pairwisely distinct,
implied by the condition i),
the polynomials $\varphi[G^{bs},\af]$ ($\af \in \mathbb{B}_1$)
do not have repeated zeros. So, the companion matrices
$
M_{x_1}(G^{bs}), \ldots, M_{x_{\on}}(G^{bs})
$
can be simultaneously diagonalized (cf.~\cite[Corollary~2.7]{Stu02}).
There exist a nonsingular matrix $P$
and diagonal matrices $D_1,\ldots,D_n$ such that
\[
P^{-1} M_{x_1}(G^{bs}) P = D_1, \quad \ldots, \quad
P^{-1} M_{x_n}(G^{bs}) P = D_n,
\]
\[
P^{-1} M(\xi, G^{bs}) P = \sum_i \xi_i D_i .
\]
Since $M(\xi, G^{bs})$ does not have a repeated eigenvalue,
each eigenvector is unique, up to a scaling.
So, there exists a diagonal matrix $D_0$ such that
$ Q_1 R_1 = P D_0.$
For each $j$, the matrix
\[
Q_1^* M_{x_j}(G^{bs}) Q_1 = R_1 D_0^{-1}  P^{-1} M_{x_j}(G^{bs}) P D_0 R_1^{-1}
\]
\[
= R_1 D_0^{-1} D_j D_0  R_1^{-1} \,
= \, R_1   D_j  R_1^{-1}  \overset{def}{=} \, S_j
\]
is upper triangular. The diagonals of $D_j$ and $S_j$ are same.
So, for all $i,j$ we have
\[
Q_1(:,i)^*M_{x_j}(G^{bs})Q_1(:,i) =
P^{-1}(i,:)M_{x_j}(G^{bs})P(:,i).
\]
($P^{-1}(i,:)$ denotes the $i$-th row of $P^{-1}$.)
For each $i$, the vector
\[
\bbm
P^{-1}(i,:)M_{x_1}(G^{bs})P(:,i) \\ \vdots \\ P^{-1}(i,:)M_{x_n}(G^{bs})P(:,i))
\ebm
\]
is one of $\overline{ v_1^{bs} },\ldots, \overline{ v_r^{bs} }$.
Up to a permutation of indices, we have
\[
\overline{ v_i^{bs} } =
\bbm
Q_1(:,i)^*M_{x_1}(G^{bs})Q_1(:,i) \\ \vdots
\\ Q_1(:,i)^*M_{ x_{\bar{n}} }(G^{bs})Q_1(:,i))
\ebm.
\]
By \reff{hatv:ls:aprx} and \reff{SchurQTerr}, the above implies that
\be \label{vi:ls=bs+eps}
v_i^{ls} = v_i^{bs} + O(\eps) \, \,( i = 1,\ldots,r).
\ee
The constants in the above $O(\eps)$ eventually only depend on $\mF$ and $\xi$.

In Algorithm~\ref{alg:tsr:aprox},
$(\lmd_1^{ls}, \ldots, \lmd_r^{ls})$ is an optimizer of
\[
\min_{ (\lmd_1, \ldots, \lmd_r) \in \cpx^r }
\| \lmd_1 (v_1^{ls})^{\otimes m} + \cdots +
\lmd_r  (v_r^{ls})^{\otimes m} - \mF \|_2^2,
\]
while $(\lmd_1^{bs}, \ldots, \lmd_r^{bs})$ is an optimizer of
\[
\min_{ (\lmd_1, \ldots, \lmd_r) \in \cpx^r }
\| \lmd_1 (v_1^{bs})^{\otimes m} + \cdots +
\lmd_r  (v_r^{bs})^{\otimes m} - \mc{X}^{bs} \|_2^2,
\]
by the condition iii).
The tensors $(v_1^{bs})^{\otimes m}, \ldots,
(v_r^{bs})^{\otimes m}$ are linearly independent,
by the condition i). For $\eps>0$ small enough,
by \reff{vi:ls=bs+eps}, we can get
\[
 \|\lmd^{ls} - \lmd^{bs}\|_2 = O(\eps), \quad
\|  \mc{X}^{gp}   -  \mc{X}^{bs}   \| = O(\eps),
\]
\[
\|  \mc{F}   -  \mc{X}^{gp}   \|  \leq
\|  \mc{F}   -  \mc{X}^{bs} \| +
\|  \mc{X}^{bs}   -  \mc{X}^{gp}   \| = O(\eps).
\]
The constants in the above $O(\eps)$ eventually only depend on $\mF$ and $\xi$.
Because $\mc{X}^{opt}$ is improved from $\mc{X}^{gp}$
by solving the optimization problem \reff{apx:LS:u},
so $\|  \mc{F}   -  \mc{X}^{opt}   \| \leq
\|  \mc{F}   -  \mc{X}^{gp}   \| $.
Hence, \reff{err:F-Xrc:bs} is true.
\end{proof}

In Theorem~\ref{thm:lrkapx:err},
if $\eps=0$, then we can get $\mF = \mc{X}^{gp}$.

\begin{cor} \label{cor:apx=>TD}
Under the assumptions of Theorem~\ref{thm:lrkapx:err},
if $\rank_S(\mc{F})=r$, then $\mc{X}^{gp}$ produced by
Algorithm~\ref{alg:tsr:aprox} gives a rank decomposition for $\mF$.
\end{cor}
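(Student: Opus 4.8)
The plan is to specialize the proof of Theorem~\ref{thm:lrkapx:err} to the exact case $\eps=0$. First I would observe that $\rank_S(\mF)=r$ means $\mF$ itself can be written as $(u_1)^{\otimes m}+\cdots+(u_r)^{\otimes m}$, so the optimal value of \reff{apx:LS:u} is zero and we may take the (almost) best rank-$r$ approximation to be $\mc{X}^{bs}=\mF$. Consequently $\mc{E}=\mF-\mc{X}^{bs}=0$ and $\eps=\|\mc{E}\|=0$, so the hypotheses of Theorem~\ref{thm:lrkapx:err} requiring $\eps$ small enough hold vacuously, and the theorem applies with all constants multiplied by $0$.

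Next I would trace the construction in Algorithm~\ref{alg:tsr:aprox} through the argument of Theorem~\ref{thm:lrkapx:err} with this zero perturbation. By condition~i) and Theorem~3.1 of \cite{GPSTD} there is a generating matrix $G^{bs}$ for $\mc{X}^{bs}=\mF$, which is therefore a feasible point of \reff{ls:Aw=b:af} with zero residual; by condition~ii) each block least squares problem $\min\|A[\mF,\af]\,G(:,\af)-b[\mF,\af]\|_2^2$ has a unique minimizer, hence $G^{ls}=G^{bs}$ exactly. Every $O(\eps)$ estimate in the proof now collapses to an equality: the Schur decomposition computed in Step~2 is a Schur decomposition of $M(\xi,G^{bs})$ itself, whose eigenvalues $\xi^T\overline{v_1^{bs}},\ldots,\xi^T\overline{v_r^{bs}}$ are distinct by condition~iv), and reading off the vectors as in \reff{hatv:ls:aprx} gives $v_i^{ls}=v_i^{bs}$ for $i=1,\ldots,r$, up to a relabeling. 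Finally, since $(v_1^{bs})^{\otimes m},\ldots,(v_r^{bs})^{\otimes m}$ are linearly independent by condition~i) and $(u_1^{bs},\ldots,u_r^{bs})$ is scaling-optimal by condition~iii), the least squares \reff{LS:vls*lmd=F} has the unique solution $\lmd_i^{ls}=\lmd_i^{bs}$, so $u_i^{ls}=u_i^{bs}$ and
\[
\mc{X}^{gp}=(u_1^{ls})^{\otimes m}+\cdots+(u_r^{ls})^{\otimes m}=\mc{X}^{bs}=\mF.
\]

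It then remains only to note that $\mc{X}^{gp}$ is exhibited as a sum of $r$ rank-$1$ symmetric tensors and that $r=\rank_S(\mF)$, so this expression is a rank decomposition of $\mF$ in the sense of \reff{df:rank:sym}. I do not expect a genuine obstacle: the content is entirely the $\eps=0$ reading of Theorem~\ref{thm:lrkapx:err}. The only points requiring a moment's care are the uniqueness of the two least squares solutions, so that the computed $G^{ls}$ and $\lmd^{ls}$ must coincide with the exact ones rather than merely being close to them, which is precisely what conditions~ii) and~i)+iii) supply, and the harmless permutation ambiguity among $v_1^{bs},\ldots,v_r^{bs}$, which does not affect the symmetric sum. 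Alternatively one could bypass the perturbation machinery and invoke the exact decomposition result of \cite{GPSTD} directly, but threading $\eps=0$ through the existing proof is the cleanest route.
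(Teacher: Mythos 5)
Your proposal is correct and follows exactly the paper's route: the text immediately preceding the corollary states the proof as the one-line observation that Theorem~\ref{thm:lrkapx:err} with $\eps=0$ gives $\mF=\mc{X}^{gp}$, and since $\mc{X}^{gp}$ is by construction a sum of $r$ rank-$1$ symmetric tensors with $r=\rank_S(\mF)$, that sum is a rank decomposition. Your more detailed tracing of the $\eps=0$ case through the argument (uniqueness of $G^{ls}$ from condition~ii), exactness of the Schur step, uniqueness of $\lmd^{ls}$ from conditions~i) and~iii), and the harmless permutation) is a faithful unpacking of the same idea rather than a different approach.
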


The assumptions in Theorem~\ref{thm:lrkapx:err} are often satisfied.
In contrast to the method in \cite{GPSTD},
Algorithm~\ref{alg:tsr:aprox} is more computationally tractable, because it
only requires to solve some linear least squares and Schur decompositions.
This is very efficient in applications, especially
for computing low rank decompositions for large tensors.
We refer to Example~\ref{sym:lwrk:STD}.

\section{Low rank approximations for nonsymmetric tensors}
\label{sc:lra:nonsym}
\setcounter{equation}{0}

Given a tensor $\mF \in \cpx^{n_1,\ldots,n_m}$ and an approximating rank $r$,
we want to find $\mc{X} \in \cpx^{n_1,\ldots,n_m}$
such that $\rank (\mc{X}) \leq r$ and $\mc{X}$
is close to $\mF$ as much as possible.
Note that $\rank (\mc{X}) \leq r$ if and only if
\[
\mc{X} = \sum_{s=1}^r  u^{s,1} \otimes \cdots \otimes u^{s,m},
\]
with each tuple $u^{(s)}:=(u^{s,1}, \ldots, u^{s,m}) \in
\cpx^{n_1} \times \cdots \times \cpx^{n_m}$.
This problem is equivalent to solving the nonlinear least squares
\be \label{opt:rank-r:F}
\min_{ u^{(1)}, \ldots,   u^{(m)} } \quad
\Big\| \sum_{s=1}^r  u^{s,1} \otimes \cdots \otimes u^{s,m}
- \mc{F} \Big\|^2.
\ee
It is typically hard to solve \reff{opt:rank-r:F} directly.
We propose to compute low rank approximations
by using generating polynomials, which are defined in \S\ref{sc:GR:nsym}.
It has three major stages:
\bit

\item Find an approximate generating matrix $G$ for $\mF$.

\item Construct the tuples $u^{(s)}$ from the computed $G$.

\item Improve the computed tuples $u^{(s)}$, if necessary.

\eit

\subsection{An algorithm for rank-$r$ approximation}

We consider nonsymmetric tensors in $\cpx^{n_1,\ldots,n_m}$.
Up to a permutation of indices, we can assume
\[
n_1 = \max\{n_1,\ldots, n_m\}.
\]
Recall the notation as in \reff{nota:r1:J}:
\[
r_1 \, := \,r - 1, \quad
J = \Big\{ (i,j,k):  \,\,
0\leq i \leq r_1, \, 2 \leq j \leq m, \, 1 \leq k \leq \on_j
\Big\}.
\]
We want to find $G \in \cpx^{[r_1] \times J}$ such that,
for each $\tau =(i,j,k) \in J$,
\[
\varphi[G, \tau] :=
\sum_{\ell=0}^{r_1}  G(\ell, \tau) x_{1,\ell}  -  x_{1,i} x_{j,k}
\]
is a generating polynomial for $\mF$,
or is close to such one as much as possible if it cannot.
Such a matrix $G$ is determined by the linear system
\be \label{F*w:aprx:Fxmu}
\sum_{\ell=0}^{r_1} \mc{F}_{x_{1,\ell} \cdot \mu }  G(\ell, \tau )
= \mc{F}_{x_{1,i} \cdot x_{j,k} \cdot \mu}
\quad \big(\,\mu \in \mathbb{M}_{ \{1,j\}^c } \big).
\ee
In the above, $\mF$ is indexed by multi-linear monomials
$x_{1,i_1} \cdots x_{m,i_m}$, as in \S\ref{sc:GR:nsym}.
If \reff{F*w:aprx:Fxmu} is satisfied, then
$G$ is a generating matrix for $\mF$ (cf.~Definition~\ref{df:GM:nsym}).
It is possible that \reff{F*w:aprx:Fxmu} is overdetermined and inconsistent.
For such case, we want $G$ to satisfy \reff{F*w:aprx:Fxmu}
as much as possible. This can be done by solving a linear least squares problem.
For each $\tau =(i,j,k) \in J$, denote
\be \label{df:AbF:ijk}
\left\{ \baray{rcl}
A[\mc{F},j] & := & \Big( \mc{F}_{x_{1,\ell} \cdot \mu}
\Big)_{\mu \in \mathbb{M}_{\{1,j\}^c}, 0\leq \ell \leq r_1}, \\
b[\mc{F},\tau] & := &  \Big( \mc{F}_{x_{1,i} \cdot x_{j,k} \cdot \mu }
\Big)_{\mu \in \mathbb{M}_{\{1,j\}^c} }.
\earay \right.
\ee
Then, \reff{F*w:aprx:Fxmu} can be equivalently expressed as
\[
A[\mc{F},j] \, G(:, \tau)  \,\, = \,\, b[\mc{F}, \tau],
\]
where $G(:, \tau)$ stands for the $\tau$-th column of $G$.
Let $G^{ls}$ be a minimizer of the linear least squares problem
\be \label{ls:AbF:ijk}
\min_{ G \in \cpx^{[r_1] \times J} } \quad
\sum_{ \tau =(i,j,k) \in J  }
\Big\| \,A[\mc{F}, j] \, G(:,\tau) - b[\mc{F},\tau] \,
\Big\|_2^2.
\ee
For each $(j,k)$, denote the matrix
{\smaller
\be \label{hatM:jk}
\hat{M}^{j,k} :=  \bbm
G^{ls}(0, (0,j,k)) & G^{ls}(1,(0,j,k)) & \cdots &  G^{ls}(r_1, (0,j,k)) \\
G^{ls}(0, (1,j,k)) & G^{ls}(1,(1,j,k)) & \cdots &  G^{ls}(r_1, (1,j,k)) \\
 \vdots & \vdots &  \ddots &  \vdots \\
G^{ls}(0, (r_1,j,k)) & G^{ls}(1,(r_1,j,k)) & \cdots &  G^{ls}(r_1, (r_1,j,k))
\ebm.
\ee \noindent}Choose
generic scalars $\xi_{j,k} >0$, and let
\be  \label{hatM(xi)}
\hat{M}(\xi) := \sum_{ (0,j,k) \in J } \,
\xi_{j,k} \hat{M}^{j,k} .
\ee
Then, compute the Schur Decomposition
\be  \label{sch:M(xi):nsy}
Q^* \hat{M}(\xi) Q \, = \, T,
\ee
where $Q=[q_1 \, \ldots \, q_r ]$ is unitary and
$T$ is upper triangular. Let
\be \label{hatv:s,j}
\hat{v}^{s,j} :=
\big(1, \,\, q_s^* \hat{M}^{j,1} q_s, \,\, \ldots, \, \,
q_s^* \hat{M}^{j,\on_j} q_s \big)
\ee
for $s =1,\ldots,r$ and $j=2,\ldots,m$.
Then, we solve the linear least squares
\be \label{scalLS:v1:vr}
\min_{ \substack{ z_1,\ldots, z_r \in \cpx^{n_1} } } \quad \left\|
\sum_{s=1}^r \, z_s \otimes \hat{v}^{s,2} \otimes
\cdots \otimes \hat{v}^{s,m} -\mc{F} \right \|^2.
\ee
Let $(\hat{v}^{1,1}, \ldots, \hat{v}^{\ell,1})$
be an optimal solution to \reff{scalLS:v1:vr}.

Then we construct the low rank tensor from the vectors $\hat{v}^{s,j}$ as
\be \label{Xgr:nsym}
\mc{X}^{gp} := \sum_{s=1}^r \hat{v}^{s,1} \otimes \hat{v}^{s,2} \otimes
\cdots \otimes \hat{v}^{s,m}.
\ee
If $\mF$ is close enough to $\sig_r(n_1,\ldots,n_m)$,
the set of $m$-tenors whose border ranks $\leq r$ (cf.~\S\ref{sbsc:symtsr}),
then $\mc{X}^{gp}$ is a good enough rank-$r$ approximation.
This is justified by Theorem \ref{thm:lraperr:nsym}.
The tensor $\mc{X}^{gp}$ is not mathematically guaranteed to
be a best rank-$r$ approximation. However, we can always improve it
by solving the optimization problem \reff{opt:rank-r:F},
with $(\hat{v}^{(1)}, \ldots, \hat{v}^{(r)})$ a starting point,
where each $\hat{v}^{(s)} := (\hat{v}^{s,1}, \ldots, \hat{v}^{s,m})$.

\medskip
Combining the above,
we get the following algorithm.

\begin{alg} \label{alg:lwrk:nsym}
Rank-$r$ approximations for nonsymmetric tensors. \\
\noindent
For given $\mc{F} \in \cpx^{n_1 \times \cdots n_m}$
and given $r \leq n_1 = \max\{n_1,\ldots,n_m\}$, do as follows:
\bit

\item [Step 1] Solve the linear least squares \reff{ls:AbF:ijk}
for an optimizer $G^{ls}$.

\item [Step 2] Choose generic $\xi_{j,k}>0$
(scale them as $\sum_{(0,j,k) \in J } \xi_{j,k} =1$) and
formulate $\hat{M}(\xi)$ as in \reff{hatM(xi)}.
Compute the Schur Decomposition \reff{sch:M(xi):nsy}.

\item [Step 3] Compute $\hat{v}^{s,j}$ as in \reff{hatv:s,j},
for $s=1,\ldots,r$ and $j=2,\ldots,m$.
Solve \reff{scalLS:v1:vr} for an optimizer
$(\hat{v}^{1,1}, \ldots, \hat{v}^{\ell,1})$.
Let $\hat{v}^{(s)} = (\hat{v}^{s,1}, \ldots, \hat{v}^{s,m})$ for each $s$.

\item [Step 4] Construct the tensor $\mc{X}^{gp}$ as in \reff{Xgr:nsym}.

\item [Step 5] Compute an improved solution $(\hat{u}^{(1)}, \ldots, \hat{u}^{(r)})$
of \reff{opt:rank-r:F} from the starting point $(\hat{v}^{(1)}, \ldots, \hat{v}^{(r)})$.
Let $\hat{u}^{(s)} = (\hat{u}^{s,1}, \ldots, \hat{u}^{s,m})$
for each $s$. Output
\be \label{Xopt:nsym}
\mc{X}^{opt} :=
\sum_{s=1}^r   \hat{u}^{s,1} \otimes \hat{u}^{s,2} \otimes
\cdots \otimes \hat{u}^{s,m}.
\ee

\eit

\end{alg}

In Algorithm~\ref{alg:lwrk:nsym}, the assumption $n_1 = \max\{n_1,\ldots,n_m\}$
can always be satisfied by permuting indices of tensors.

\begin{remark} \label{est:rk:nonsym}
The approximating rank $r$ in Algorithm~\ref{alg:lwrk:nsym} can be estimated
from the Catalecticant matrix $\mbox{Cat}(\mF)$.
If $\mF = \mc{X} + \mc{E}$, then
$\mbox{Cat}(\mF) = \mbox{Cat}(\mc{X}) + \mbox{Cat}(\mc{E})$.
If $\mc{E}$ is tiny, then $\rank \, \mbox{Cat}(\mc{X})$
can be estimated by $\rank\,\mbox{Cat}(\mF)$. By Lemma~\ref{rank=Cat:nsym},
$\rank\,(\mc{X}) = \rank\,\mbox{Cat}(\mc{X})$
if $\mc{X}$ is a generic point of $\sig_r(n_1,\ldots,n_m)$ and
$r$ is not bigger than the lower size of $\mbox{Cat}(\mc{X})$.
Hence, when $\mF$ is close to $\sig_r(n_1,\ldots,n_m)$, the value of
$r$ can be estimated by using the singular values of $\mbox{Cat}(\mF)$.
We refer to Remark~\ref{est:rk:cat}.
\end{remark}

When $r=1$, we can get a closed formula for $\mc{X}^{gp}$.
In such case, the column $\hat{G}(:, (0,j,k))$ is given as
\be \label{r=1:w0jk}
\hat{G}(:, (0,j,k)) =
\Big( \sum_{\mu \in \mathbb{M}_{\{1,j\}^c } } |\mc{F}_{\mu}|^2 \Big)^{-1}
\sum_{\mu \in \mathbb{M}_{\{1,j\}^c } }
\mc{F}_{\mu}^*  \mc{F}_{x_{j,k} \cdot \mu}  .
\ee
For $j=2,\ldots,m$, the vector $\hat{v}^{1,j}$ can be computed as
\be \label{hatv:1j}
\hat{v}^{1,j} = \Big(1, \,\, \hat{G}\big(0,(0,j,1)\big), \,\,
\ldots, \,\, \hat{G}\big(0,(0,j,\on_j)\big) \Big).
\ee
The vector $\hat{v}^{1,1}$ is then given by the formula
($k=1,\ldots,n_1$):
\be \label{r=1:hatv:11}
(\hat{v}^{1,1})_k =
\Big( \prod_{j=2}^m  \|\hat{v}^{1,j}\|_2^2 \Big)^{-1}
\sum_{i_2,\ldots,i_m} \mc{F}_{k,i_2,\ldots,i_m}
(\hat{v}^{1,2})^*_{i_2} \cdots (\hat{v}^{1,m})^*_{i_m}.
\ee

\subsection{Performance analysis}
\label{sbsc:err:nonsym}

We analyze the quality of low rank tensors produced by
Algorithm~\ref{alg:lwrk:nsym}. Suppose the tensor
\be \label{dc:Xbs:nsym}
\mc{X}^{bs} := \sum_{s=1}^r
u^{s,1} \otimes  \cdots  \otimes u^{s, m}
\ee
is a best, or an almost best, rank-$r$ approximation for $\mc{F}$. Let
\be \label{F=Xbs+E:nsym}
\mc{E} = \mF - \mc{X}^{bs}, \quad \| \mc{E} \| = \eps.
\ee
For convenience, we index $u^{s,j}$ by $k=0,1,\ldots,n_j-1$.
Assume $(u^{s,j})_0 \ne 0$ for $j =2,\ldots,m$.
This is satisfiable by applying a general linear transformation on tensors.
Up to a scaling, we can further assume that
\be \label{u:el:j2m:0=1}
(u^{s,2})_0 = \cdots = (u^{s,m})_0 = 1.
\ee
Let $A[\mF,j],\, b[\mc{F},\tau])$ be as in \reff{df:AbF:ijk}, then
\be \label{Ab:F=Xls+E:nsym}
\left\{ \baray{rcl}
A[\mc{F},j] &=& A[\mc{X}^{bs},j] + A[\mc{E},j], \\
b[\mc{F},\tau] &=& b[\mc{X}^{bs},\tau] + b[\mc{E},\tau].
\earay \right.
\ee
Denote $u^{(s)} := (u^{s,1},\ldots, u^{s,r})$ for $s=1,\ldots,r$.
The tuple $(u^{(1)},\ldots, u^{(r)})$ is called {\it scaling-optimal}
for $\mF$ if $(u^{1,1},\ldots, u^{r,1})$ is an optimizer of
\be \label{opt:scal:usj:nsym}
\min_{y_1, \ldots, y_r \in \cpx^{n_1}} \quad
\left \| \sum_{s=1}^r
y_s \otimes u^{s,2} \otimes \cdots \otimes u^{s,m}
  -\mc{F} \right \|_2^2.
\ee
It is reasonable to assume that $(u^{(1)},\ldots, u^{(r)})$ is scaling-optimal,
because otherwise we can replace it by a scaling-optimal one
in the choice of $\mc{X}^{bs}$.

The quality of the low rank tensors $\mc{X}^{gp}, \mc{X}^{opt}$,
produced by Algorithm~\ref{alg:lwrk:nsym}, is analyzed as follows. Denote
$\xi := (\xi_{j,k})_{(0,j,k) \in J}$.

\begin{theorem} \label{thm:lraperr:nsym}
Let $\mc{F}, \mc{X}^{bs}, \mc{E}, u^{s,j}$ be as in
\reff{dc:Xbs:nsym}-\reff{u:el:j2m:0=1}.
Assume that
\bit

\item [i)] $\big\{ (u^{s,1})_{0:r_1} \big\}_{s=1}^r$,
$\{ u^{s,2} \otimes \cdots \otimes u^{s,m} \}_{s=1}^r$
are linearly independent;

\item [ii)] each $A[\mc{X}^{bs},j] \,(2 \leq j \leq m)$ has full column rank;

\item [iii)] the tuple $(u^{(1)},\ldots, u^{(r)})$ is scaling-optimal for $\mF$;

\item [iv)] the following scalars are pairwisely distinct {\small
\be \label{dstnct:xijk:usjk}
\sum_{(0,j,k) \in J} \xi_{j,k} (u^{2,j})_k, \ldots,
\sum_{(0,j,k) \in J} \xi_{j,k} (u^{m,j})_k.
\ee}
\eit
If $\mc{X}^{gp}, \mc{X}^{opt}$ are from Algorithm~\ref{alg:lwrk:nsym}
and $\eps = \| \mc{E} \|$ is small enough, then
\be \label{err:F-Xgr:nsym}
\| \mc{X}^{bs} - \mc{X}^{gp} \| = O(\eps), \qquad
\| \mc{F} - \mc{X}^{opt} \| \leq \| \mc{F} - \mc{X}^{gp} \| = O(\eps),
\ee
where the constants in the above $O(\cdot)$ only depend on $\mF$ and $\xi$.
\end{theorem}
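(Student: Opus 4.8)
The plan is to follow the proof of Theorem~\ref{thm:lrkapx:err} almost verbatim, with the companion matrices $M_{x_i}(G)$ replaced by the matrices $M^{j,k}(G)$ of \reff{df:Mjk}, and with Theorem~\ref{thm:exist:wijk} and Proposition~\ref{prop:Mjk:eig} playing the roles of their symmetric analogues. First I would use condition~i): since $\{(u^{s,1})_{0:r_1}\}_{s=1}^r$ is linearly independent, the matrix $U_1$ of \reff{nota:U:U1} has rank $r$, so Theorem~\ref{thm:exist:wijk} produces a generating matrix $G^{bs}\in\cpx^{[r_1]\times J}$ for $\mc{X}^{bs}$, and Proposition~\ref{prop:Mjk:eig} tells us that each $(u^{s,j})_k$ is an eigenvalue of $M^{j,k}(G^{bs})$ with common eigenvector $(u^{s,1})_{0:r_1}$. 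Writing $G^{ls}(:,\tau)=A[\mc{F},j]^\dag b[\mc{F},\tau]$ and $G^{bs}(:,\tau)=A[\mc{X}^{bs},j]^\dag b[\mc{X}^{bs},\tau]$, the splitting \reff{Ab:F=Xls+E:nsym} gives $\|A[\mc{F},j]-A[\mc{X}^{bs},j]\|_F\le\eps$ and $\|b[\mc{F},\tau]-b[\mc{X}^{bs},\tau]\|_2\le\eps$; since $A[\mc{X}^{bs},j]$ has full column rank by condition~ii) (hence so does $A[\mc{F},j]$ for $\eps$ small), the standard perturbation bound for overdetermined least squares yields $\|G^{ls}-G^{bs}\|_F=O(\eps)$ with a constant depending only on $\mc{F}$.

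Next I would transfer this to the eigenvalue side. From \reff{hatM:jk}--\reff{hatM(xi)} we get $\|\hat M^{j,k}-M^{j,k}(G^{bs})\|_F=O(\eps)$ and, setting $M(\xi,G^{bs}):=\sum_{(0,j,k)\in J}\xi_{j,k}M^{j,k}(G^{bs})$, also $\|\hat M(\xi)-M(\xi,G^{bs})\|_F=O(\eps)$. By Proposition~\ref{prop:Mjk:eig} the eigenvalues of $M(\xi,G^{bs})$ are precisely the scalars listed in \reff{dstnct:xijk:usjk}, which condition~iv) makes pairwise distinct; thus $M(\xi,G^{bs})$ has a simple spectrum, and so does $\hat M(\xi)$ for $\eps$ small. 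Perturbation theory of Schur decompositions (\cite{KPC94} or \cite[Theorem~4.1]{Sun95}) then gives a unitary $Q_1$ and an upper triangular $T_1$ with $Q_1^*M(\xi,G^{bs})Q_1=T_1$, $\|Q-Q_1\|_F=O(\eps)$, $\|T-T_1\|_F=O(\eps)$, the constants ultimately depending only on $\mc{F}$ and $\xi$. Because the $M^{j,k}(G^{bs})$ share the eigenbasis $\{(u^{s,1})_{0:r_1}\}_{s=1}^r$ they commute and are simultaneously diagonalizable (cf.~\cite[Corollary~2.7]{Stu02}); as $M(\xi,G^{bs})$ has distinct eigenvalues, $Q_1$ must upper-triangularize every $M^{j,k}(G^{bs})$ simultaneously, so that $Q_1(:,s)^*M^{j,k}(G^{bs})Q_1(:,s)$ is the $s$-th diagonal entry, equal --- after one fixed permutation of $\{1,\dots,r\}$ --- to $(u^{s,j})_k$. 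Feeding $\|Q-Q_1\|_F=O(\eps)$ and $\|\hat M^{j,k}-M^{j,k}(G^{bs})\|_F=O(\eps)$ into the Rayleigh quotients of \reff{hatv:s,j} then gives $\hat v^{s,j}=u^{s,j}+O(\eps)$ for $j=2,\dots,m$.

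Finally I would handle the last linear least squares. Since $\hat v^{s,2}\otimes\cdots\otimes\hat v^{s,m}$ is $O(\eps)$-close to $u^{s,2}\otimes\cdots\otimes u^{s,m}$, condition~i) makes these rank-one tensors linearly independent for $\eps$ small, so \reff{scalLS:v1:vr} is a full-column-rank linear least squares and is an $O(\eps)$-perturbation of \reff{opt:scal:usj:nsym}; by condition~iii) the optimizer of \reff{opt:scal:usj:nsym} is $(u^{1,1},\dots,u^{r,1})$, hence $\hat v^{s,1}=u^{s,1}+O(\eps)$. Assembling \reff{Xgr:nsym} then gives $\|\mc{X}^{gp}-\mc{X}^{bs}\|=O(\eps)$ and $\|\mc{F}-\mc{X}^{gp}\|\le\|\mc{F}-\mc{X}^{bs}\|+\|\mc{X}^{bs}-\mc{X}^{gp}\|=\eps+O(\eps)=O(\eps)$; since $\mc{X}^{opt}$ is obtained by further decreasing the objective of \reff{opt:rank-r:F} starting from $(\hat v^{(1)},\dots,\hat v^{(r)})$, we get $\|\mc{F}-\mc{X}^{opt}\|\le\|\mc{F}-\mc{X}^{gp}\|$, which is \reff{err:F-Xgr:nsym}.

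The main obstacle is the middle step: showing that the \emph{same} unitary $Q_1$ simultaneously triangularizes the whole commuting family $\{M^{j,k}(G^{bs})\}$ with a permutation-consistent ordering of the diagonals, so that the Rayleigh quotients $q_s^*\hat M^{j,k}q_s$ in \reff{hatv:s,j} actually converge to $(u^{s,j})_k$ and not to some scrambled mixture. This is exactly where conditions~i) and iv) are used, and it requires the careful bookkeeping --- as in the symmetric proof --- of passing between the Schur form of $M(\xi,G^{bs})$ and its eigendecomposition, and of verifying that all the $O(\eps)$ constants depend only on $\mc{F}$ and $\xi$. The remaining ingredients (the least-squares and Schur-form perturbation bounds) are routine given the cited results.
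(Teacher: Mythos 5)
Your proposal is correct and follows essentially the same route as the paper's own proof of Theorem~\ref{thm:lraperr:nsym}: least-squares perturbation for $G^{ls}$ vs.\ $G^{bs}$ via condition~ii), Schur perturbation for $\hat M(\xi)$ vs.\ $M_{bs}(\xi)$ via condition~iv), passage between the Schur form and the common eigendecomposition of the commuting family $\{M^{j,k}(G^{bs})\}$ to identify the Rayleigh quotients with $(u^{s,j})_k$ up to a fixed permutation, and finally the scaling-optimality condition~iii) for the $\hat v^{s,1}$ step. The obstacle you single out (consistent simultaneous triangularization with a permutation-stable ordering of diagonals) is precisely the point the paper handles by factoring $Q_1R_1 = PD_0$ through the eigendecomposition, so your plan is complete.
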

\begin{proof} \,
The main idea of the proof is similar to the one for Theorem~\ref{thm:lrkapx:err}.
By the condition i) and Theorem~\ref{thm:exist:wijk},
there exists a generating matrix $ G^{bs} \in \cpx^{[r_1] \times J}$ for $\mc{X}^{bs}$,
i.e., for each $\tau = (i,j,k) \in J$,
\[
A[\mc{X}^{bs},j] \, G^{bs}(:,\tau) = b[\mc{X}^{bs}, \tau].
\]
Since $G^{ls}, G^{bs}$ are the least squares solutions, we have
\[
G^{ls}(:,\tau) = A[\mc{F},j]^\dag \cdot b[\mc{F},\tau], \quad
G^{bs}(:,\tau) = A[\mc{X}^{bs},j]^\dag \cdot  b[\mc{X}^{bs},\tau].
\]
By \reff{F=Xbs+E:nsym}, we have
\[
\| \, A[\mc{F},j]- A[\mc{X}^{bs}, j] \,  \|_F \leq \eps, \quad
\| b[\mc{F},\tau] - b[\mc{X}^{bs}, \tau] \, \|_2 \leq \eps.
\]
Hence, by the condition ii), if $\eps>0$ is small enough, then,
\[
\| G^{ls}(:,\tau) - G^{bs}(:,\tau)  \|_2 = O(\epsilon).
\]
The constant in the above $O(\cdot)$ only depends on $\mF$.
Let $M_{bs}^{j,k}$ be the matrix obtained from $\hat{M}^{j,k}$ in \reff{hatM:jk}
by replacing $G^{ls}$ with $G^{bs}$, then
\[
\| \hat{M}^{j,k} - M_{bs}^{j,k} \|_F = O(\epsilon).
\]
Let
\[
M_{bs}(\xi) := \sum_{ (0,j,k)\in J }
\xi_{jk} M_{bs}^{j,k}.
\]
Since $\sum_{(0,j,k) \in J} \xi_{j,k} = 1$, we can further get
\[
 \| \hat{M}(\xi) - M_{bs}(\xi) \|_F = O(\epsilon).
\]
The constant in the above $O(\cdot)$ only depends on $\mF$ and $\xi$.

By \reff{dc:Xbs:nsym} and Proposition~\ref{prop:Mjk:eig},
for $s = 1, \ldots, r$ and $k \geq 1$, we have
\[
M_{bs}^{j,k}
\bbm  (u^{s,1})_0 \\ (u^{s,1})_1  \\ \vdots  \\(u^{s,1})_{r_1} \ebm
= (u^{s,j})_k
\bbm  (u^{s,1})_0 \\ (u^{s,1})_1  \\ \vdots  \\(u^{s,1})_{r_1} \ebm.
\]
So, each $(u^{s,j})_k$ is an eigenvalue
of $M_{bs}^{j,k} $, with the eigenvector $(u^{s,1})_{0:r_1}$,
for $s=1,\ldots,r$.
The matrices $M_{bs}^{j,k}$ are simultaneously diagonizable,
by the condition i),
and so is $M_{bs}(\xi)$. The eigenvalues of $M_{bs}(\xi)$ are
the numbers listed in \reff{dstnct:xijk:usjk}.
They are distinct from each other, by the condition iv).
For $\eps>0$ small enough, $\hat{M}(\xi)$ also
has distinct eigenvalues. For such $\eps>0$,
there exist unitary $Q_1$ and upper triangular $T_1$ such that
\be \label{SchErr:nsym}
Q_1^* M_{bs}(\xi) Q_1 = T_1, \quad
\| Q - Q_1\|_F = O(\eps), \quad \|T-T_1\|_F = O(\eps).
\ee
We refer to \cite{KPC94} or \cite[Theorem~4.1]{Sun95} for the above.
The constants in the above $O(\cdot)$ eventually only depend on $\mF$ and $\xi$.
Because $T_1$ has distinct diagonal entries,
there exists an upper triangular nonsingular matrix $R_1$ such that
$
\Lambda_1 \, := \, R_1^{-1} T_1 R_1
$
is diagonal. This results in the eigenvalue decomposition
\[
(Q_1R_1)^{-1} M_{bs}(\xi) (Q_1R_1) = \Lambda_1.
\]
Since all $M_{bs}^{j,k}$ are simultaneously diagonalizable, for each $(j,k)$,
there exist a nonsingular matrix $P$ and a diagonal matrix $D_{j,k}$ such that
\[
P^{-1} M_{bs}^{j,k} P = D_{j,k}, \quad
P^{-1} M_{bs}(\xi) P = \sum_{ (0,j,k) \in J } \xi_{j,k} D_{j,k}.
\]
Since $M_{bs}(\xi)$ does not have repeated eigenvalues,
each eigenvector is unique, up to a scaling.
So, there exists a diagonal matrix $D_0$ so that
$Q_1 R_1 = P D_0$.
For each $j,k$, the matrix
\[
Q_1^* M_{bs}^{j,k} Q_1 = R_1 D_0^{-1}  P^{-1} M_{bs}^{j,k}  P D_0 R_1^{-1}=
\]
\[
R_1 D_0^{-1} D_j D_0  R_1^{-1} \,
= \, R_1   D_j  R_1^{-1}  \overset{def}{=} \, T_j
\]
is upper triangular. The diagonals of $D_j$ and $T_j$ are same.
So, for all $s,j$,
\[
Q_1(:,s)^*M_{bs}^{j,k}  Q_1(:,s) =
P^{-1}(s,:)M_{bs}^{j,k}  P(:,s).
\]
($P^{-1}(s,:)$ denotes the $s$-th row of $P^{-1}$.)
For each $s$, the vector
\[
\bbm
P^{-1}(s,:)M_{bs}^{j,1} P(:,s) \\ \vdots \\ P^{-1}(s,:)M_{bs}^{j,\on_j}P(:,s))
\ebm
\]
is one of $(u^{1,j})_{1:\on_j}, \ldots, (u^{r, j})_{1:\on_j}$, for $j \geq 2$.
Up to a permutation of indices, we have the equation
\[
(u^{s, j})_{1:\on_j} =
\bbm
Q_1(:,s)^* M_{bs}^{j,1} Q_1(:,s) \\ \vdots \\
Q_1(:,s)^* M_{bs}^{j,\on_j} Q_1(:,s))
\ebm.
\]
By \reff{SchErr:nsym}, the above implies that
\be \label{usj=vsj+e}
u^{s, j} = \hat{v}^{s, j}  + O(\eps)
\ee
for all $s = 1, \ldots, r$, $j=2,\ldots,m$.
The constant in the above $O(\cdot)$ eventually only depends on $\mF$ and $\xi$.

For each $i=0,1,\ldots, n_1-1$,
the vector $\big( (u^{1,1})_i,\ldots, (u^{r,1})_i \big)$
is a minimizer of the linear least squares
\[
\min_{ (w_1, \ldots, w_r) \in \cpx^r  }
\Big \| \sum_{s=1}^r  \, w_s
\big(u^{s,2} \otimes \cdots \otimes u^{s,m}\big) - \mc{X}^{bs}(i,:) \Big\|_2^2,
\]
because $( u^{(1)}, \ldots, u^{(r)})$ is scaling optimal for $\mF$.
In Algorithm~\ref{alg:lwrk:nsym}, each vector
$\big((\hat{v}^{1,1})_i,\ldots, (\hat{v}^{r,1})_i\big)$
is a minimizer of the linear least squares
\[
\min_{ (w_1, \ldots, w_r) \in \cpx^r }
\Big\| \sum_{s=1}^r  \, w_s
\big(\hat{v}^{\ell,2} \otimes \cdots \otimes \hat{v}^{\ell,m} \big)
 - \mc{F}(i,:) \Big\|_2^2.
\]
In the above,  $\mc{X}^{bs}(i,:)$ (resp., $\mF(i,:)$)
denotes the sub-tensor of $\mc{X}^{bs}$ (resp., $\mF$)
whose first index is fixed to be $i$.
By the condition i), \reff{F=Xbs+E:nsym}, \reff{usj=vsj+e},
if $\eps>0$ is small enough, then for $s=1,\ldots,r$
\[
 \| u^{s, 1} - \hat{v}^{s,1}  \|_2 = O(\eps).
\]
Hence, it implies that
$
\| \mc{X}^{gp}   -  \mc{X}^{bs}   \| = O(\eps),
$
and
\[
\| \mc{F} - \mc{X}^{gp} \| \leq
\|  \mc{F}   -  \mc{X}^{bs} \| +
\|  \mc{X}^{bs}   -  \mc{X}^{gp}   \| = O(\eps).
\]
The constants in the above $O(\cdot)$ only depend on $\mF$ and $\xi$.
Note that $\mc{X}^{opt}$ is an improved approximation from the starting point $\mc{X}^{gp}$,
by solving the optimization problem \reff{opt:rank-r:F}. So,
$\| \mc{F} - \mc{X}^{opt} \| \leq \| \mc{F} - \mc{X}^{gp} \|$,
and \reff{err:F-Xgr:nsym} is true.
\end{proof}

In Theorem~\ref{thm:lraperr:nsym},
if $\eps=0$, then we can get $\mF = \mc{X}^{gp}$.

\begin{cor} \label{nsm:eps0=>TD}
Under the assumptions of Theorem~\ref{thm:lraperr:nsym},
if $\rank\,\mc{F}=r$, then the tensor $\mc{X}^{gp}$ produced by
Algorithm~\ref{alg:lwrk:nsym} gives a rank decomposition for $\mF$.
\end{cor}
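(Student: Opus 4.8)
The plan is to reduce the corollary to the case $\eps=0$ of Theorem~\ref{thm:lraperr:nsym}. Since $\rank\,\mc{F}=r$, the tensor $\mc{F}$ has rank exactly $r$ and so attains the value $0$ in the nonlinear least squares \reff{opt:rank-r:F}; hence $\mc{F}$ is itself a best rank-$r$ approximation of $\mc{F}$, and we may take $\mc{X}^{bs}=\mc{F}$. Then $\mc{E}=\mF-\mc{X}^{bs}=0$ and $\eps=0$. The four hypotheses of Theorem~\ref{thm:lraperr:nsym} are precisely the standing assumptions of the corollary applied to this $\mc{X}^{bs}$; in particular scaling-optimality (condition iii)) holds automatically here, since with $\mc{F}=\mc{X}^{bs}$ the linear least squares \reff{opt:scal:usj:nsym} has residual $0$ attained at $y_s=u^{s,1}$, and this minimizer is unique by the linear independence of $\{ u^{s,2} \otimes \cdots \otimes u^{s,m} \}_{s=1}^r$ assumed in condition i).

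Next I would re-run the proof of Theorem~\ref{thm:lraperr:nsym} with $\eps=0$ and check that every $O(\eps)$ estimate there collapses to an exact equality. With $\eps=0$ we have $A[\mc{F},j]=A[\mc{X}^{bs},j]$ and $b[\mc{F},\tau]=b[\mc{X}^{bs},\tau]$ for all $\tau=(i,j,k)\in J$; since $A[\mc{X}^{bs},j]$ has full column rank by condition ii), the least squares problem \reff{ls:AbF:ijk} has a unique solution, which forces $G^{ls}=G^{bs}$, where $G^{bs}$ is the generating matrix of $\mc{X}^{bs}$ furnished by condition i) and Theorem~\ref{thm:exist:wijk}. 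Consequently $\hat{M}^{j,k}=M_{bs}^{j,k}$ and $\hat{M}(\xi)=M_{bs}(\xi)$ exactly. By condition iv) the eigenvalues of $M_{bs}(\xi)$ are already pairwise distinct, so its Schur decomposition \reff{sch:M(xi):nsy} and the subsequent simultaneous-diagonalization argument carry through with no perturbation, giving, up to a common permutation of the summand index $s$, the equalities $\hat{v}^{s,j}=u^{s,j}$ for $j=2,\ldots,m$. Finally, substituting these vectors into \reff{scalLS:v1:vr}, the choice $z_s=u^{s,1}$ yields residual $0$, and the minimizer is unique by the linear independence in condition i), so $\hat{v}^{s,1}=u^{s,1}$. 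Therefore
\[
\mc{X}^{gp}=\sum_{s=1}^r \hat{v}^{s,1}\otimes\cdots\otimes\hat{v}^{s,m}
=\sum_{s=1}^r u^{s,1}\otimes\cdots\otimes u^{s,m}=\mc{X}^{bs}=\mc{F}.
\]
Since this writes $\mc{F}$ as a sum of $r$ rank-one tensors and $\rank\,\mc{F}=r$, it is a rank decomposition, which is the assertion.

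The only point requiring care is the one just flagged: verifying that each of the three approximate steps of Algorithm~\ref{alg:lwrk:nsym} --- the linear least squares for the generating matrix, the Schur decomposition, and the linear least squares for the first-mode vectors --- becomes exact when $\eps=0$. This is exactly where conditions i), ii), iv) are used: they guarantee that the relevant least squares solutions and ordered eigenvectors are uniquely determined and that the residuals are genuinely zero, so that the conclusion $\mc{X}^{gp}=\mc{F}$ holds on the nose rather than merely up to $O(\eps)$, and no ``$\eps$ small enough'' hypothesis is invoked.
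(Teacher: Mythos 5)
Your proposal is correct and follows the same route the paper intends: take $\mc{X}^{bs}=\mc{F}$, observe $\eps=0$, and invoke Theorem~\ref{thm:lraperr:nsym} (whose proof you re-run to show all $O(\eps)$ bounds become exact equalities, yielding $\mc{X}^{gp}=\mc{F}$). The paper gives no separate proof for the corollary beyond the one-line remark preceding it that $\eps=0$ implies $\mF=\mc{X}^{gp}$; your write-up usefully spells out why each of the three approximate stages of Algorithm~\ref{alg:lwrk:nsym} becomes exact under conditions i), ii), iv), which is exactly the content hidden behind that remark.
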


The assumptions in Theorem~\ref{thm:lraperr:nsym}
are often satisfied. Because $\mc{X}^{gp}$ can be computed
by only solving linear least squares and Schur decompositions,
Algorithm~\ref{alg:lwrk:nsym} is very efficient
for computing low rank tensor decompositions,
especially for large scale tensors.
We refer to Example~\ref{emp:lowTD}.

\section{Numerical experiments}
\label{sc:comp}
\setcounter{equation}{0}

In this section, we present numerical experiments
for low rank tensor approximations.
The computation is implemented in MATLAB R2012a,
on a Lenovo Laptop with CPU@2.90GHz and RAM 16.0G.
In Step~5 of Algorithm~\ref{alg:tsr:aprox},
the MATLAB function {\tt lsqnonlin} is applied to solve
the nonlinear least squares problem \reff{apx:LS:u}.
In Step~5 of Algorithm~\ref{alg:lwrk:nsym},
{\tt lsqnonlin} is also applied to solve
the nonlinear least squares problem \reff{opt:rank-r:F}.

\subsection{Examples for symmetric tensors}
\label{sbsc:exmp:sym}

For the tensor $\mc{X}^{opt}= (u_1^{opt})^{\otimes m}
+ \cdots + (u_r^{opt})^{\otimes m}$ produced by Algorithm~\ref{alg:tsr:aprox},
we display it by showing the decomposing vectors
$u_1^{opt},\dots, u_r^{opt}$. Each $u_i^{opt}$ is displayed in a row,
separated by parenthesises. For neatness,
only four decimal digits are displayed.

\begin{exm}
Consider the cubic tensor $\mF \in \mt{S}^3( \cpx^n )$ such that
\[
\mF_{i_1 i_2 i_3} = \sin( i_1 + i_2 + i_3) \, \,
(1 \leq i_1, i_2, i_3 \leq n).
\]
For $n=6$, the first $3$ biggest singular values of
$\mbox{Cat}(\mF)$ are respectively
\[
   5.7857, \quad   5.4357, \quad  7 \times 10^{-16}.
\]
By Remark~\ref{est:rk:cat}, we consider the rank-$2$ approximation.
The tensor $\mc{X}^{opt}$
produced by Algorithm~\ref{alg:tsr:aprox} is given as:
{\tiny
\begin{verbatim}
(0.7053 - 0.3640i   0.0748 - 0.7902i  -0.6245 - 0.4899i  -0.7496 + 0.2608i  -0.1856 + 0.7717i   0.5491 + 0.5731i),
(0.7053 + 0.3640i   0.0748 + 0.7902i  -0.6245 + 0.4899i  -0.7496 - 0.2608i  -0.1856 - 0.7717i   0.5491 - 0.5731i).
\end{verbatim} \noindent}The approximation error
$\| \mF - \mc{X}^{opt} \| \approx  1 \times 10^{-15}$.
The approximating  tensor $\mc{X}^{opt}$
gives a rank decomposition, up to a tiny round-off error.
The computation is similar for other values of $n$.
\end{exm}

\begin{exm}
Consider the cubic tensor $\mF \in \mt{S}^3( \cpx^{n} )$ such that
\[
\mF_{i_1 i_2 i_3}  =  (i_1+i_2+i_3)^{-1} \quad (1 \leq i_1, i_2, i_3 \leq n).
\]
For $n=10$, the first $5$ biggest singular values of $\mbox{Cat}(\mF)$ are respectively
\[
    1.7660  ,\quad   0.1675   ,\quad   0.0135   ,\quad   0.0009   ,\quad   4 \times 10^{-5}.
\]
By Remark~\ref{est:rk:cat},
we consider rank-$r$ approximations with $r=1,2,3,4$.
The approximation errors produced by Algorithm~\ref{alg:tsr:aprox} are given as
\bcen
\begin{tabular}{|c|c|c|c|c|} \hline
$r$  &  1 & 2 & 3 & 4 \\  \hline
$\|\mF-\mc{X}^{opt}\|$    &    0.2632  &  0.0257  &  0.0021   &  0.0001   \\  \hline
\end{tabular}
\ecen
For convenience, we only list $\mc{X}^{opt}$ for $r=4$, which is given as:
{\scriptsize
\begin{verbatim}
(0.3946  0.3847  0.3749  0.3655  0.3564  0.3475  0.3388  0.3303  0.3220  0.3140),
(0.4885  0.4228  0.3664  0.3171  0.2743  0.2373  0.2054  0.1779  0.1539  0.1331),
(0.4851  0.3190  0.2091  0.1377  0.0910  0.0601  0.0396  0.0260  0.0170  0.0114),
(0.3453  0.1185  0.0417  0.0140  0.0041  0.0010  0.0002  0.0002  0.0002 -0.0000).
\end{verbatim}
}
\end{exm}

\begin{exm}
Consider the quartic tensor $\mF \in \mt{S}^4( \re^n )$ such that
\[
\mF_{i_1 i_2 i_3 i_4}  =   \exp(-i_1 i_2 i_3 i_4) \quad
(1 \leq i_1, i_2, i_3, i_4 \leq n).
\]
For $n=5$, the first $4$ biggest singular values of
$\mbox{Cat}(\mF)$ are respectively
\[
 0.4212  ,\quad  0.0365 ,\quad   0.0017  ,\quad  3 \times 10^{-5}.
\]
By Remark~\ref{est:rk:cat},
we consider rank-$r$ approximations with $r=1, 2,3$.
The approximation errors produced by Algorithm~\ref{alg:tsr:aprox} are given as:
\bcen
\begin{tabular}{|c|c|c|c|c|} \hline
$r$  &  1 &  2 & 3  \\  \hline
$\|\mF-\mc{X}^{opt}\|$    &   0.0809  & 0.0043  &  0.0001  \\  \hline
\end{tabular}
\ecen
For $r=3$, the approximating tensor $\mc{X}^{opt}$ is given as:
{\scriptsize
\begin{verbatim}
(1.4730-0.1984i   0.0646+0.0069i   0.0081+0.0034i   0.0026+0.0010i   0.0010+0.0003i),
(0.9753+0.9594i   0.0394-0.0347i  -0.0007-0.0106i  -0.0013-0.0016i  -0.0005-0.0002i),
(1.1326+0.5153i  -0.0454+0.0210i  -0.0086-0.0026i   0.0005-0.0019i   0.0008-0.0008i).
\end{verbatim}
}
\end{exm}

\begin{exm}
Consider the quartic tensor $\mF \in \mt{S}^4( \cpx^n )$ such that
\[
\mF_{i_1 i_2 i_3 i_4}  =    \log( i_1+i_2+i_3+i_4 ) \quad
(1 \leq i_1,i_2,i_3,i_4 \leq n).
\]
For $n=5$, the $5$ biggest singular values of $\mbox{Cat}(\mF)$ are respectively
\[
   36.9612  ,\quad  0.8344 ,\quad   0.0200 ,\quad   0.0005 ,\quad   1 \times 10^{-5}.
\]
By Remark~\ref{est:rk:cat},
we consider rank-$r$ approximations with $r=1,2,3,4$.
The approximation errors produced by Algorithm~\ref{alg:tsr:aprox} are given as:
\bcen
\begin{tabular}{|c|c|c|c|c|} \hline
$r$  &  1 & 2 & 3 & 4 \\  \hline
$\|\mF-\mc{X}^{opt}\|$    &     1.5806 &   0.0434   & 0.0013  &  $3 \times 10^{-5}$   \\  \hline
\end{tabular}
\ecen
For $r=4$, the approximating tensor $\mc{X}^{opt}$ is given as:
{\scriptsize
\begin{verbatim}
(0.2994-0.2994i  0.1297-0.1297i  0.0562-0.0562i  0.0242-0.0242i  0.0104-0.0104i),
(0.5197-0.5197i  0.3766-0.3766i  0.2729-0.2729i  0.1978-0.1978i  0.1434-0.1434i),
(0.7592-0.7592i  0.6986-0.6986i  0.6428-0.6428i  0.5914-0.5914i  0.5442-0.5442i),
(1.3204+0.0000i  1.3284+0.0000i  1.3364+0.0000i  1.3445+0.0000i  1.3527+0.0000i).
\end{verbatim}
}
\end{exm}

\begin{exm}
Consider the tensor $\mF \in \mt{S}^5( \re^n )$ such that
\[
\mF_{i_1 \ldots i_5} = \Big( i_1^2 + i_2^2 + i_3^2 + i_4^2 + i_5^2 \Big)^{1/2}
\]
for $1 \leq i_1,\ldots, i_5 \leq n$. For $n=4$,
the $5$ biggest singular values of $\mbox{Cat}(\mF)$ are respectively
\[
86.5310  ,\quad  3.5162   ,\quad   0.1215 ,\quad    0.0066   ,\quad   0.0003.
\]
By Remark~\ref{est:rk:cat},
we consider rank-$r$ approximations with $r=2,3,4$.
The approximation errors produced by Algorithm~\ref{alg:tsr:aprox} are given as:
\bcen
\begin{tabular}{|c|c|c|c|c|} \hline
$r$  &  2 & 3 & 4   \\  \hline
$\|\mF-\mc{X}^{opt}\|$    &   0.3760  &  0.0232  &  0.0014   \\  \hline
\end{tabular}
\ecen
For $r=4$, the approximating tensor $\mc{X}^{opt}$ is given as:
{\scriptsize
\begin{verbatim}
(0.5352 - 0.3888i   0.2098 - 0.1524i   0.0438 - 0.0318i   0.0036 - 0.0026i),
(0.7630 + 0.5543i   0.5756 + 0.4182i   0.3600 + 0.2616i   0.1870 + 0.1359i),
(1.5300 - 0.0000i   1.5435 - 0.0000i   1.5664 - 0.0000i   1.5990 - 0.0000i),
(1.1282 - 0.8197i   1.0731 - 0.7797i   0.9871 - 0.7172i   0.8781 - 0.6380i).
\end{verbatim}
}
\end{exm}

\begin{exm}
Consider the tensor $\mF \in \mt{S}^6( \re^n )$ such that
\[
\mF_{i_1 \ldots i_6} = \log \Big( \prod_{j=1}^6 i_j
+ \exp \big( \sum_{j=1}^6 i_j  \big )  \Big)
\]
for $1 \leq i_1,\ldots, i_6 \leq n$. For $n=4$,
the $4$ biggest singular values of $\mbox{Cat}(\mF)$ are respectively
\[
  306.8458  ,\quad  6.8405 ,\quad   0.0008  ,\quad    1 \times 10^{-7}.
\]
By Remark~\ref{est:rk:cat}, we consider rank-$r$ approximations with $r=2,3$.
The approximation errors produced by Algorithm~\ref{alg:tsr:aprox} are given as:
\bcen
\begin{tabular}{|c|c|c|} \hline
$r$  & 2 & 3   \\  \hline
$\|\mF-\mc{X}^{opt}\|$   &   0.0029   &   $3 \times 10^{-6} $   \\  \hline
\end{tabular}
\ecen
For $r=3$, the approximating tensor $\mc{X}^{opt}$ is given as:
{\scriptsize
\begin{verbatim}
(3.6696 - 0.0610i   3.6704 - 0.0609i   3.6711 - 0.0608i   3.6719 - 0.0608i),
(3.2072 + 1.7812i   3.2066 + 1.7807i   3.2060 + 1.7803i   3.2054 + 1.7799i),
(0.1842 - 0.3188i   0.1357 - 0.2348i   0.0752 - 0.1299i   0.0371 - 0.0640i).
\end{verbatim}
}
\end{exm}

\begin{exm}  \label{emp:rler:LRsTA}
We explore the practical performance of Algorithm~\ref{alg:tsr:aprox}.
By Theorem~\ref{thm:lrkapx:err}, if $\mF$ is sufficiently close to a rank-$r$
symmetric tensor, then $\mc{X}^{opt}$ is guaranteed to be a good enough approximation.
We verify this conclusion for random nearly low rank tensors.
First, generate a tensor
\[
\mc{R} =  (u_1)^{\otimes m} + \cdots + (u_r)^{\otimes m},
\]
where each $u_i \in \cpx^n$ has random real and imaginary parts,
obeying Gaussian distributions.
Then, perturb $\mc{R}$ by a small tensor $\mc{E} \in \mt{S}^m(\cpx^n)$,
whose entries are also randomly generated.
We scale $\mc{E}$ to have a desired norm $\eps >0$. Finally, set
\[
\mF = \mc{R} + \mc{E}.
\]
The approximation quality of $\mc{X}^{opt}$ can be measured by the relative error
\[
{\tt relerr} :=  \| \mF - \mc{X}^{opt} \| \,\, / \,\, \| \mc{E} \|.
\]
For each $(n,m,r)$ from Table~\ref{tab:rlerr:sym} and
each $\eps$ among $10^{-1}, 10^{-2}, 10^{-3}$,
we generate $20$ random instances. For each instance,
apply Algorithm~\ref{alg:tsr:aprox} to get a rank-$r$ approximation.
For each $(n,m,r)$, we list the maximum value of the occurring {\tt relerr}
(denoted as {\tt mrlerr}),
and the average of consumed time (in seconds), in Table~\ref{tab:rlerr:sym}.
%
%
\bcen
\begin{table}[htb]
\caption{Performance of Algorithm~\ref{alg:tsr:aprox}
for computing low rank symmetric tensor approximations.}
\btab{||r|r|c|r||r|r|c|r||} \hline
$(n,m)$   & $r$   & {\tt mrlerr}  & {\tt time}  & $(n,m)$ & $r$ & {\tt mrlerr} &  {\tt time}  \\ \hline
(50, 3) & 1   &   0.9991   & 27.54 &  (30, 4) & 1 &  0.9998  & 65.99   \\  \hline
(40, 3) & 2   &   0.9975   & 18.74 &  (25, 4) & 2 &  0.9992  & 46.89   \\  \hline
(30, 3) & 3   &   0.9934   & 10.13 &  (20, 4) & 3 &  0.9981  & 27.92   \\  \hline
(20, 3) & 4   &   0.9817   &  3.71 &  (15, 4) & 4 &  0.9936  & 11.39   \\  \hline
(10, 3) & 5   &   0.9094   &  0.53 &  (10, 4) & 5 &  0.9772  &  2.44   \\  \hline
\etab
\label{tab:rlerr:sym}
\end{table}
\ecen
\end{exm}

\begin{exm} \label{sym:lwrk:STD}
(Low rank symmetric tensor decompositions.)
By Corollary~\ref{cor:apx=>TD}, if $\rank_S(\mF) =r$, then $\mc{X}^{gp}$
produced by Algorithm~\ref{alg:tsr:aprox} gives a rank decomposition for $\mF$,
under the conditions in Theorems~\ref{thm:lrkapx:err}.
We verify this fact for random low rank tensors.
We generate $\mF$ in the same way as in Example~\ref{emp:rler:LRsTA}, except letting $\mc{E}=0$.
For each generated $\mF$, we apply Algorithm~\ref{alg:tsr:aprox}
with $r = \rank \, \mbox{Cat}(\mF)$. This is justified by Remark~\ref{est:rk:cat}.
For each $(n,m,r)$ in Table~\ref{tab:time:lwrkSTD},
we generate $20$ random instances of $\mF$.
For all generated instances, the approximating tensors $\mc{X}^{gp}$ gave
correct rank decompositions (up to tiny round-off errors in the order around $10^{-10}$).
Since $\mc{X}^{gp} = \mF$, Step~5 of Algorithm~\ref{alg:tsr:aprox}
does not need to be performed.
For each $(n,m,r)$, the average time (in seconds) consumed by the computation is listed in
Table~\ref{tab:time:lwrkSTD}. As we can see, low rank decompositions
can be computed very efficiently.
Algorithm~\ref{alg:tsr:aprox} only needs to solve some linear least squares
and Schur's decompositions. So, it is suitable
for computing low rank symmetric tensor decompositions,
especially for large tensors.
\bcen
\begin{table}[htb]
\caption{Performance of Algorithm~\ref{alg:tsr:aprox}
for computing low rank decompositions of symmetric tensors.}
\btab{||c|r|r||c|r|r||c|r|r||} \hline
$(n,m)$ & $r$  & {\tt time} & $(n,m)$ & $r$ & {\tt time} &  $(n,m)$ & $r$ & {\tt time} \\ \hline
(10, 3) &  5  &  0.43     &  (10, 4) & 5  &  0.43    &  (5, 5)  &  10  & 0.57   \\  \hline
(20, 3) &  10 &  7.39     &  (15, 4) & 10  & 3.50    &  (10, 5)  & 15  & 29.12  \\  \hline
(30, 3) &  15  & 38.49   &  (20, 4) & 15  & 13.27   &  (15, 5)  & 20  & 311.07   \\  \hline
(40, 3) &  20  & 124.04   & (25, 4) & 20  & 28.34   &  (5, 6)  &  10  &  2.85  \\  \hline
(50, 3) & 25 &  307.83  &  (30, 4) & 25  & 53.99   &  (10, 6)  & 20  &  182.89   \\  \hline
\etab
\label{tab:time:lwrkSTD}
\end{table}
\ecen
\end{exm}

\subsection{Examples for nonsymmetric tensors}
\label{ssc:exmp:nonsym}
\setcounter{equation}{0}

We display the tensor $\mc{X}^{opt}$,
given as in \reff{Xopt:nsym} by Algorithm~\ref{alg:lwrk:nsym},
by displaying the tuples $\hat{u}^{(s)}$ one by one.
Different tuples are separated by a blank row.
For each tuple $\hat{u}^{(s)} = (\hat{u}^{s,1}, \ldots, \hat{u}^{s,m})$,
we display the vectors $\hat{u}^{s,1}, \ldots, \hat{u}^{s,m}$
row by row in the order.

\begin{exm}
Consider the tensor $\mF \in \cpx^{7 \times 6 \times 5}$ such that
\[
\mF_{i_1,i_2,i_3}  = \Big( \exp(i_1) + \exp(i_2^2) + \exp(i_3^3) \Big)^{-1}
\]
for all $i_1,i_2,i_3$ in the range.
The $3$ biggest singular values of $\mbox{Cat}(\mF)$ are:
\[
 0.1542  ,\quad  0.0010 ,\quad 7  \times 10^{-12}.
\]
By Remark~\ref{est:rk:nonsym}, we consider rank-$r$ approximations with $r=1,2$.
Applying Algorithm~\ref{alg:lwrk:nsym}, we get the approximation errors:
\bcen
\begin{tabular}{|c|c|c|} \hline
$r$ &  1 & 2 \\  \hline
 $\|\mF-\mc{X}^{opt}\|$ &    $0.0107$  &  $5 \times 10^{-4}$  \\ \hline
\end{tabular}
\ecen
For $r=2$, the rank-$2$ approximating tensor $\mc{X}^{opt}$ is given as:
{\scriptsize
\begin{verbatim}
(0.1157    0.0682    0.0286    0.0083    0.0017    0.0003    0.0000),
(1.0000    0.0851   -0.0001   -0.0000   -0.0000   -0.0000),
(1.0000    0.0000   -0.0000    0.0000    0.0000);

(0.0069    0.0098    0.0106    0.0083    0.0048    0.0022    0.0009),
(1.0000    0.9887    0.0161    0.0000    0.0000    0.0000),
(1.0000    0.0403    0.0000   -0.0000   -0.0000).
\end{verbatim}
}
\end{exm}

\begin{exm}
Consider the tensor $\mF \in \cpx^{5 \times 4\times 4}$ such that
\[
\mF_{i_1,i_2,i_3}  =   \cos(i_1-i_2-i_3)
\]
for all $i_1,i_2,i_3$ in the range.
The $3$ biggest singular values of $\mbox{Cat}(\mF)$ are:
\[
5.0371  ,\quad   3.8638    ,\quad   2 \times 10^{-16}.
\]
By Remark~\ref{est:rk:nonsym}, we consider the rank-$2$ approximation.
The computed rank-$2$ approximating tensor $\mc{X}^{opt}$ is given as:
{\scriptsize
\begin{verbatim}
(0.2702 + 0.4207i   0.5000 - 0.0000i   0.2702 - 0.4207i  -0.2081 - 0.4546i  -0.4950 - 0.0706i),
(1.0000             0.5403 + 0.8415i  -0.4161 + 0.9093i  -0.9900 + 0.1411i),
(1.0000             0.5403 + 0.8415i  -0.4161 + 0.9093i  -0.9900 + 0.1411i);

(0.2702 - 0.4207i   0.5000 - 0.0000i   0.2702 + 0.4207i  -0.2081 + 0.4546i  -0.4950 + 0.0706i),
(1.0000             0.5403 - 0.8415i  -0.4161 - 0.9093i  -0.9900 - 0.1411i),
(1.0000             0.5403 - 0.8415i  -0.4161 - 0.9093i  -0.9900 - 0.1411i).
\end{verbatim} \noindent}The approximation error
$\| \mF - \mc{X}^{opt} \|  \approx   6  \times 10^{-16}$.
It gives a rank decomposition for the tensor, up to a tiny round-off error.
\end{exm}

\begin{exm}
Consider the tensor $\mF \in \cpx^{8 \times 7 \times 6 \times 5}$
such that
\[
\mF_{i_1,i_2,i_3,i4}  = (1+i_1+2 i_2+3 i_3+4 i_4)^{-1}
\]
for all $i_1,i_2,i_3,i_4$ in the range.
The $5$ biggest singular values of $\mbox{Cat}(\mF)$ are:
\[
    1.2758  ,\quad  0.0585  ,\quad  0.0030  ,\quad  0.0001  ,\quad   5 \times 10^{-6}.
\]
By Remark~\ref{est:rk:nonsym}, we consider rank-$r$ approximations with $r=1,2,3,4$.
Applying Algorithm~\ref{alg:lwrk:nsym}, we get the approximation errors:
\bcen
\begin{tabular}{|c|c|c|c|c|} \hline
$r$ &  1 & 2 & 3  &  4\\  \hline
 $\|\mF-\mc{X}^{opt}\|$ &    0.0690  &  0.0041  &  0.0002   &   $9 \times 10^{-6}$   \\ \hline
\end{tabular}
\ecen
For $r=2$, the approximating tensor $\mc{X}^{opt}$ is given as:
{\scriptsize
\begin{verbatim}
(0.0471    0.0411    0.0361    0.0317    0.0279    0.0246    0.0217    0.0192),
(1.0000    0.7662    0.5938    0.4632    0.3620    0.2821    0.2181),
(1.0000    0.6750    0.4654    0.3227    0.2219    0.1485),
(1.0000    0.5971    0.3669    0.2242    0.1309);

(0.0421    0.0413    0.0406    0.0398    0.0390    0.0382    0.0375    0.0368),
(1.0000    0.9639    0.9268    0.8904    0.8555    0.8224    0.7912),
(1.0000    0.9452    0.8897    0.8374    0.7892    0.7455),
(1.0000    0.9259    0.8533    0.7879    0.7302).
\end{verbatim}
}
\end{exm}

\begin{exm}
Consider the tensor $\mF \in \cpx^{5 \times 5 \times 4 \times 4}$ such that
\[
\mF_{i_1,i_2,i_3,i4}  = \cos(i_1+i_2-i_3-i_4)
- 10^{-3} \cdot \sin(i_1 i_2 i_3 i_4)
\]
for all $i_1,i_2,i_3,i_4$ in the range.
The $3$ biggest singular values of $\mbox{Cat}(\mF)$ are:
\[
 10.2674  ,\quad  9.7136  ,\quad   0.0059.
\]
By Remark~\ref{est:rk:nonsym}, we consider the rank-$2$ approximation.
Applying Algorithm~\ref{alg:lwrk:nsym},
we get the rank-$2$ approximating tensor $\mc{X}^{opt}$ given as:
{\scriptsize
\begin{verbatim}
(0.4998 - 0.0001i   0.2701 - 0.4206i  -0.2081 - 0.4545i  -0.4948 - 0.0704i  -0.3265 + 0.3783i),
(1.0000             0.5406 - 0.8415i  -0.4161 - 0.9095i  -0.9901 - 0.1412i  -0.6536 + 0.7569i),
(1.0000             0.5405 + 0.8417i  -0.4163 + 0.9094i  -0.9901 + 0.1411i),
(1.0000             0.5405 + 0.8417i  -0.4163 + 0.9094i  -0.9901 + 0.1411i);

(0.4998 + 0.0001i   0.2701 + 0.4206i  -0.2081 + 0.4545i  -0.4948 + 0.0704i  -0.3265 - 0.3783i),
(1.0000             0.5406 + 0.8415i  -0.4161 + 0.9095i  -0.9901 + 0.1412i  -0.6536 - 0.7569i),
(1.0000             0.5405 - 0.8417i  -0.4163 - 0.9094i  -0.9901 - 0.1411i),
(1.0000             0.5405 - 0.8417i  -0.4163 - 0.9094i  -0.9901 - 0.1411i).
\end{verbatim} \noindent}The approximation error
$ \| \mF - \mc{X}^{opt} \| \approx  0.0141$.
\end{exm}

\begin{exm}
Consider the tensor $\mF \in \cpx^{9 \times  8  \times  7 \times  6  \times  5  }$
such that
\[
\mF_{i_1,i_2,i_3,i_4, i_5}  =  \arctan
\Big(i_1 \cdot (i_2)^2 \cdot (i_3)^3 \cdot (i_4)^4 \cdot (i_5)^5 \Big)
\]
for all $i_1,i_2,i_3,i_4, i_5$ in the range.
The first $4$ biggest singular values of $\mbox{Cat}(\mF)$ are
\[
  193.1060  ,\quad  1.0818  ,\quad  0.0089  ,\quad   9 \times 10^{-7}.
\]
By Remark~\ref{est:rk:nonsym}, we consider rank-$r$ approximations with $r=1,2,3$.
Applying Algorithm~\ref{alg:lwrk:nsym}, we get the approximation errors:
\bcen
\begin{tabular}{|c|c|c|c|} \hline
$r$ &  1 & 2 & 3   \\  \hline
 $\|\mF-\mc{X}^{opt}\|$ &   1.1127  &  0.0349  &  0.0001 \\ \hline
\end{tabular}
\ecen
For $r=3$, the approximating tensor $\mc{X}^{opt}$ is given as:
{\scriptsize
\begin{verbatim}
( 1.5708    1.5708    1.5708    1.5708    1.5708    1.5708    1.5708    1.5708    1.5708),
( 1.0000    1.0000    1.0000    1.0000    1.0000    1.0000    1.0000    1.0000),
( 1.0000    1.0000    1.0000    1.0000    1.0000    1.0000    1.0000),
( 1.0000    1.0000    1.0000    1.0000    1.0000    1.0000),
( 1.0000    1.0000    1.0000    1.0000    1.0000);

( 0.2161    0.0373    0.0120    0.0053    0.0028    0.0016    0.0011    0.0007    0.0005),
( 1.0000    0.0249    0.0024    0.0005    0.0002    0.0001    0.0000    0.0000),
( 1.0000    0.0033    0.0001    0.0000    0.0000   -0.0000   -0.0000),
( 1.0000    0.0005    0.0000   -0.0000   -0.0000   -0.0000),
( 1.0000    0.0001   -0.0000   -0.0000   -0.0000);

(-1.0015   -0.5010   -0.3338   -0.2502   -0.2002   -0.1668   -0.1430   -0.1251   -0.1112),
( 1.0000    0.2500    0.1110    0.0624    0.0400    0.0277    0.0204    0.0156),
( 1.0000    0.1249    0.0370    0.0156    0.0080    0.0046    0.0029),
( 1.0000    0.0624    0.0123    0.0039    0.0016    0.0008),
( 1.0000    0.0312    0.0041    0.0010    0.0003).
\end{verbatim}
}
\end{exm}

\begin{exm}
Consider the  tensor $\mF \in \cpx^{  5  \times  5  \times
 5   \times    4   \times    4   \times    4 }$
such that
\[
\mF_{i_1,i_2,i_3,i_4, i_5, i_6}  =  \log \big(1+\exp(i_1 i_2i_3 + i_4i_5 i_6) \big)
\]
for all $i_1, \ldots, i_6$ in the range.
The first $4$ biggest singular values of $\mbox{Cat}(\mF)$ are
\[
10^3 \times (   4.5208  ,\quad  0.5794  ,\quad  0.0002  ,\quad  2 \times 10^{-5}  ).
\]
By Remark~\ref{est:rk:nonsym}, we consider rank-$r$ approximations with $r=2,3$.
Applying Algorithm~\ref{alg:lwrk:nsym}, we get the approximation errors:
\bcen
\begin{tabular}{|c|c|c|} \hline
$r$  & 2 & 3\\  \hline
 $\|\mF-\mc{X}^{opt}\|$ &    0.1919   &  0.0365  \\ \hline
\end{tabular}
\ecen
For $r=3$, the approximating tensor $\mc{X}^{opt}$ is given as
{\scriptsize
\begin{verbatim}
(1.0000    1.0000    1.0000    1.0000    1.0000),
(1.0000    1.0000    1.0000    1.0000    1.0000),
(1.0000    1.0000    1.0000    1.0000    1.0000),
(1.0000    2.0000    3.0000    4.0000),
(1.0000    2.0000    3.0000    4.0000),
(1.0000    2.0000    3.0000    4.0000);

(1.0000    2.0000    3.0000    4.0000    5.0000),
(1.0000    2.0000    3.0000    4.0000    5.0000),
(1.0000    2.0000    3.0000    4.0000    5.0000),
(1.0000    1.0000    1.0000    1.0000),
(1.0000    1.0000    1.0000    1.0000),
(1.0000    1.0000    1.0000    1.0000);

(0.1346    0.0435    0.0154    0.0056    0.0021),
(1.0000    0.3229    0.1141    0.0418    0.0158),
(1.0000    0.3229    0.1141    0.0418    0.0158),
(1.0000    0.3233    0.1148    0.0430),
(1.0000    0.3233    0.1148    0.0430),
(1.0000    0.3233    0.1148    0.0430).
\end{verbatim}}
\end{exm}

\begin{exm} \label{exmp:rlerr:LRTA}
We explore the practical performance of Algorithm~\ref{alg:lwrk:nsym}.
Theorem~\ref{thm:lraperr:nsym} shows that $\mc{X}^{opt}$
is a good enough rank-$r$ approximation if $\mF$ is sufficiently close to a rank-$r$ tensor.
We verify this conclusion for random nearly low rank tensors.
For given $(n_1,\ldots,n_m)$ and $r$, we generate a tensor
\[
\mc{R} = \sum_{s=1}^r u^{s,1} \otimes u^{s,2} \otimes \cdots \otimes u^{s,m},
\]
where each $u^{s,j} \in \cpx^{n_j}$ is a complex vector
whose real and imaginary parts are generated randomly, obeying Gaussian distributions.
Then, we perturb $\mc{R}$ by a small tensor $\mc{E}$,
whose are also randomly generated in the same way.
Scale $\mc{E}$ to have a desired norm $\eps$, and then let
\[
\mF = \mc{R} + \mc{E}.
\]
We choose $\eps$ among $10^{-1}, 10^{-2}, 10^{-3}$,
and use the relative error
\[
{\tt relerr}  =  \| \mF - \mc{X}^{opt} \| \,\, / \,\,  \| \mc{E} \|
\]
to measure the approximation quality of $\mc{X}^{opt}$.
For each $(n_1,\ldots,n_m)$, $r$ and such $\eps$,
we generate $20$ instances of $\mc{R},\mF,\mc{E}$ as above,
and then apply Algorithm~\ref{alg:lwrk:nsym} to compute $\mc{X}^{opt}$.
The computational results are reported in Table~\ref{tb:rlerr:nsymLRTA}.
For each case of $(n_1,\ldots,n_m)$ and $r$,
we list the maximum of the occuring relative errors
(denoted as {\tt mrlerr}),
and the average time (in seconds) consumed by the computation.
%
%
\bcen
\begin{table}
\caption{Performance of Algorithm~\ref{alg:lwrk:nsym}
for computing low rank tensor approximations. }
\btab{||l|l|r|r||l|l|r|r||} \hline
$(n_1, n_2, n_3)$ & $r$  &  {\tt mrlerr} &  {\tt time}  &
           $(n_1, n_2, n_3, n_4)$    & $r$  &  {\tt mrlerr} & {\tt time}  \\ \hline
(50,50,50) & 1  &  0.9995  & 10.67   & (30,30,30,30) & 1 &  0.9999  &  63.71  \\  \hline
(40,40,40) & 2  &  0.9984  & 16.95   & (25,25,25,25) & 2 &  0.9998  &  69.88  \\  \hline
(30,30,30) & 3  &  0.9952  &  9.20   & (20,20,20,20) & 3 &  0.9993  &  41.39  \\  \hline
(20,20,20) & 4  &  0.9865  &  2.64   & (20,20,15,15) & 4 &  0.9984  &  28.47  \\  \hline
(10,10,10) & 5  &  0.9356  &  0.47   & (15,15,10,10) & 5 &  0.9961  &   4.46 \\  \hline
\etab
\label{tb:rlerr:nsymLRTA}
\end{table}
\ecen
\end{exm}

\begin{exm} \label{emp:lowTD}
(Low rank decompositions of nonsymmetric tensors.)
By Corollary~\ref{nsm:eps0=>TD}, if $\rank \mF = r$,
then the tensor $\mc{X}^{gp}$ produced by Algorithm~\ref{alg:lwrk:nsym}
gives a rank decompositions for $\mF$, under the conditions of
Theorem~\ref{thm:lraperr:nsym}. We verify this conclusion
for random low rank tensors. We generate $\mF$
in the same way as in Example~\ref{exmp:rlerr:LRTA}, except $\mc{E}=0$.
In the computation, we choose $r = \rank\,\mbox{Cat}(\mF)$,
as pointed out in Remark~\ref{est:rk:nonsym}.
For each case of $(n_1,\ldots,n_m)$ and $r$,
we generate $20$ instances of $\mF$.
(The symbol $\star$ means that only $10$ instances
are generated, because of the longer computational time).
For all the instances, the approximating tensors $\mc{X}^{gp}$ gave
correct rank decompositions (up to round-off errors in the order around $10^{-10}$).
The Step~5 in Algorithm~\ref{alg:lwrk:nsym} does not need to be performed,
because $\mF = \mc{X}^{gp}$. For each case of $(n_1,\ldots,n_m)$ and $r$,
we list the average of the computational time (in seconds),
in Table~\ref{timetab:lowTD}. It shows that Algorithm~\ref{alg:lwrk:nsym} is efficient
for computing low rank tensor decompositions, especially for large tensors.
This is because it only needs to solve some linear least squares
and Schur's decompositions.
\bcen
\begin{table}
\caption{Performance of Algorithm~\ref{alg:lwrk:nsym} for computing
low rank decompositions of nonsymmetric tensors.}
\btab{||l|l|r||l|l|r||} \hline
$(n_1, n_2, n_3)$ & $r$  & {\rm time}  &
          $(n_1, n_2, n_3, n_4)$  & $r$  & {\rm time}      \\ \hline
(60,60,60)    &  10  &  0.47  & (20,20,20,20) &  10 &    2.55  \\  \hline
(70,70,70)    &  20  &  2.28  & (25,25,25,25) &  20 &   18.00  \\  \hline
(80,80,80)    &  30  &  5.51  & (40,30,25,20) &  30 &   36.63  \\  \hline
(90,90,90)    &  40  & 13.90  & (50,40,30,25) &  40 &  208.00    \\  \hline
(100,100,100) &  50  & 25.96  & (60,50,40,30)$\star$ & 50 &  1010.93 \\  \hline
\etab
\label{timetab:lowTD}
\end{table}
\ecen
\end{exm}

\noindent
{\bf Acknowledgement}
The research was partially supported by the NSF grants
DMS-0844775 and DMS-1417985.

\end{document}